\newcommand{\real}{\mathbb{R}}
\newtheorem{theorem}{Theorem}
\newtheorem{definition}{Definiton}
\newtheorem{lemma}{Lemma}
\newcommand{\A}{\mathcal{H}}
\newcommand{\Se}{\mathcal{Y}}
\newcommand{\Sn}{\mathcal{S}^{n-1}}
\newcommand{\F}{\mathcal{F}}
\newcommand{\ones}{{\bf1}}
\newcommand{\prox}{\text{prox}}
\begin{document}

\title{Convergence of a Steepest Descent Algorithm  for\\ Ratio Cut Clustering}

\author{ Xavier Bresson\thanks{Department of Computer Science, City University of Hong Kong, Hong Kong ({\tt xbresson@cityu.edu.hk}).},   Thomas Laurent\thanks{Department of Mathematics, University of California Riverside, Riverside CA 92521  ({\tt laurent@math.ucr.edu})}, David Uminsky\thanks{Department of Mathematics, University of California Los Angeles, Los Angeles CA 90095 ({\tt duminsky@math.ucla.edu})} and James H. von Brecht\thanks{Department of Mathematics, University of California Los Angeles, Los Angeles CA 90095 ({\tt jub@math.ucla.edu})}  }

\maketitle

\abstract{Unsupervised clustering of scattered, noisy and high-dimensional data points is an important and difficult problem. Tight continuous relaxations of balanced cut problems have recently been shown to provide excellent clustering results. In this paper, we present an explicit-implicit gradient flow scheme for the relaxed ratio cut problem, and prove that the algorithm converges to a critical point of the energy. We also show the efficiency of the proposed algorithm on the two moons dataset.  }

%%%%%%%%%%% 

\section{Introduction}

Partitioning data points into sensible groups is a fundamental problem in machine learning  and has a wide range of applications. An efficient approach to deal with this problem is to cast the data partitioning problem as a graph clustering problem. Given a set of data points $V=\{ x_1, \ldots, x_n\}$ and similarity weights $\{w_{i,j}\}_{ 1\le i,j\le n}$, the clustering problem aims at finding a balanced cut of the graph of the data. %The data partitioning problem is thus reduced to design an efficient algorithm to find exact or tight approximations of balanced cuts.
 In this work, we consider the balanced cut of Hagen and Kahng \cite{ratiocut} known as ratio cut. The ratio cut problem is

\begin{align}\label{rc1}
& \text{Minimize } \;\;  \text{RatioCut}(S)= \frac{\sum_{{x_i}\in S} \sum_{{x_j}\in S^c} w_{i,j}}{  |S|}+\frac{\sum_{{x_i}\in S} \sum_{{x_j}\in S^c} w_{i,j}}{  |S^c|} \\
& \text{over all subsets $S\subsetneq V$.} \nonumber
\end{align}
 Here $|S|$ denotes the number of data points in $S$. While the  problem, as stated above,  is NP-hard, it has the following \emph{tight continuous} relaxation: 
\begin{align} \label{rc2}
&\text{Minimize }   \;\;  E(f)= \frac{ \frac{1}{2}\sum_{i,j}  w_{i,j} |f_i-f_j|  }{ \sum_i |f_i-m(f)| } \\
& \text{over all non-constant functions $f: V \to \real$.} \nonumber
\end{align}
Here $m(f)$ stands for the average of $f \in \mathbb{R}^n$ and $f_i$ stands for $f(x_i)$. Recently, various algorithms have been proposed
\cite{pro:SzlamBresson10,pro:HeinBuhler10OneSpec,pro:HeinSetzer11TightCheeger,art:BressonTaiChanSzlam12TransLearn, pro:Rang-Hein-constrained}
to minimize relaxations of balance cut problem similar to \eqref{rc2}. In this work, we present an explicit-implicit gradient flow algorithm, then prove that the iterates converge to critical points of the energy. We also present numerical experiments to show the robustness and efficiency of the algorithm.

\subsection{The Tight Continuous Relaxation}

We begin by first explaining  the meaning of the term \emph{tight relaxation}. Since $E$ is invariant under the addition of a constant, problem \eqref{rc2} is equivalent to

\begin{align} \label{rc3}
&\text{Minimize }  \;\; \;\;   \frac{ \frac{1}{2}\sum_{i,j}  w_{i,j} |f_i-f_j|  }{ \sum_i |f_i| }
  \\  
&\text{ over all $f: V \to \real$ s.t. $m(f) =0$ and $f \neq 0$.} \nonumber
\end{align}
If the graph is connected then the total variation functional  $\frac{1}{2}\sum_{i,j}  w_{i,j} |f_i-f_j|$ defines a norm on the space of mean zero functions; we denote it by $\|f\|_{TV}$.  The denominator of \eqref{rc3} is simply the $\ell^1$-norm, and we denote it by  $\|f \|_1$.

The continuous problem \eqref{rc3}  is a tight relaxation of \eqref{rc1} in the following sense--- if $S^*$ is a solution of $\eqref{rc1}$, then any nonzero, binary function of mean zero
\begin{equation}
f^*(x_i)=\begin{cases} a & \text{ if } x_i \in S^* \\b  & \text{ if } x_i \in (S^*)^c 
\end{cases}
\end{equation}
is a solution of problem \eqref{rc3}. This is a consequence of the fact that the the extreme points of the TV-unit ball
$$
\{f \in \real^n:  \|f\|_{TV} \le 1, m(f)=0 \}
$$
are binary functions (see \cite{pro:SzlamBresson10} for a proof of this fact). Therefore, if we fix $\|f\|_{TV} = 1$ and maximize the convex functional in the denominator of \eqref{rc3}, the minimum of the ratio is attained at an extreme point. That is, at a binary function of mean zero. Binary functions of mean zero are always of the form
$$
f= \lambda \left( |S^c| \chi_{S}-  |S|\chi_{S^c} \right), \; \; S \subsetneq V, \;\; \lambda \neq 0,
$$
where $\chi_S$ is the characteristic function of the set $S$. For such a function, we easily check that $ E(f)=\text{RatioCut}(S)/2$.
From this observation we can see that  if  $S^*$ is a solution of the ratio cut problem \eqref{rc1}, then
$f^*= \lambda \left( |(S^*)^c| \chi_{S^*}+ |S^*|\chi_{(S^*)^c} \right)$ is a solution of the continuous relaxation \eqref{rc3} for any $\lambda \neq 0$.
 A different proof of the fact that problem  \eqref{rc2} is a tight relaxation of problem  \eqref{rc1} can be found in \cite{pro:Rang-Hein-constrained}.

\subsection{Explicit-implicit gradient Flow}
Let 
\begin{equation}
T(f)=  \|f\|_{TV} \text{ and } B(f)= \sum_i |f_i-m(f)|.
\end{equation}
Note that both $T$ and $B$ are convex.
If $T$ and $B$ were differentiable,   the explicit-implicit gradient flow of $E=T/B$ would be
\begin{equation} \label{continuous-grad0}
\frac{f^{k+1}-f^{k}}{\tau^k}= -  \frac{ \nabla T (f^{k+1})- E(f^k) \nabla B(f^k) }{B(f^k)}
\end{equation}
 where $\tau^k$ is the time step. Since $T$ and $B$ are not differentiable, 
we replace \eqref{continuous-grad0} with its non-smooth equivalent:
  \begin{align}
&g^k =f^{k}+ \frac{\tau^k}{B(f^k)} E(f^k) v^{k}  \quad \text{for some } v^{k} \in \partial B (f^{k}) 
\label{algo11}\\
&f^{k+1}= \arg \min_f \left\{ T(f)+ B(f^k) \frac{\|f-g^k\|^2}{2 \tau^k} \right\} \label{algo22}.
\end{align}
 The minimization problem \eqref{algo22} is a standard  ROF problem \cite{art:RudinOsherFatemi92ROF} that can be solved efficiently  using  approaches such as augmented Lagrangian method \cite{art:GoldsteinOsher09SB} or primal-dual method \cite{art:ChambollePock11FastPD}.
The scheme  \eqref{algo11}--\eqref{algo22}, as will be shown in the next section, decreases the energy  and preserve the zero mean properties of the successive iterates.
In order to remain away from the origin, where the energy is not defined, we project each iterate onto the sphere $\Sn=\{u \in \real^n: \|u\|_2=1\}$ at the end of each step. In numerical experiments we observe faster convergence when the time step is chosen to be
\begin{equation}
\tau^k=c\frac{B(f^k)}{E(f^k)}, \quad c>0.
\end{equation}

With these choices, we arrive at our proposed algorithm to find critical points of the ratio cut functional \eqref{rc2}:
  \begin{align}
&g^k =f^{k}+ c v^{k}  \quad \text{for some } v^{k} \in \partial B (f^{k}) \label{algo1}\\
&h^{k}= \arg \min_f \left\{ T(f)+ E(f^k) \frac{\|f-g^k\|^2}{2 c} \right\} \label{algo2} \\
&f^{k+1}= \frac{h^k}{\|h^k\|_2},
\end{align}
which we formalize in Algorithm 1.

\begin{algorithm}[!] 
\caption{Steepest descent of the RatioCut functional \eqref{rc2}}
\label{alg-ratiocut}
\begin{algorithmic}
\STATE $f^{k=0}$ nonzero function with mean zero. \\
$c$ positive constant. 
\WHILE{ loop not converged}
\STATE $w^k \in \text{sign} (f^k),   \quad  v^k = w^k- m(w^k) , \quad \lambda^k= \frac{\|f^k\|_{TV}}{\|f^k\|_1}$ \\
\STATE{{$g^{k}=f^{k}+c\, v^k$} 
}
\STATE{$h^{k}=\arg\min_f \ \{ \|f\|_{TV}+\frac{\lambda^k}{2c}||f-g^{k}||_2^2 \ \}$
}
 \STATE{$f^{k+1}= \frac{h^k}{\|h^k\|_2} $}
\ENDWHILE
\end{algorithmic}
\end{algorithm}

Let $\{f^k\}$ denote a sequence of iterates generated by Algorithm 1, starting from a non-zero function $f^0$ with $m(f^0) = 0$. In section 2, we show that any accumulation point of this sequence is a critical point of the the ratio cut functional \eqref{rc2}. Moreover we show that $\|f^{k+1}-f^{k}\|_2 \to 0$ as $k \to\infty$, so that either the sequence converges or the set of accumulation points is a connected subset of the sphere $\Sn$. In section 3 we demonstrate the efficiency of Algorithm 1 on the two moons example.

%%%%%%%%%%%%%

%%%%%%%%%%%%%

\section{Convergence}

Given a connected graph, we want to minimize
$$
E(f)= \frac{\sum_{i,j=1}^n w_{i,j} |f_i-f_j|}{ \sum_{i=1}^n | f_i-m(f)|}= \frac{T(f)}{B(f)}
$$
over the space of non-constant functions $f \in \real^n$. (Note that $E$ is not defined for constant functions).
This is equivalent to  minimizing $E$ over the set of non-constant functions with mean zero, which we write as
$$
\F=\{f \in \real^n: m(f)=0 \text{ and } f \neq 0\}.
$$
We define $\ones := (1,\ldots,1)^{T} \in \real^n,$ so that $m(f) = \langle\ones,f\rangle/n$ and $\ones^\perp$ gives the space of functions with mean zero. Clearly $\F$ is an open subset of $\ones^\perp$. As we assume a connected graph, $T$ and $B$ define norms on $\ones^\perp$. Since all norms are equivalent in finite dimensions, there exist constants $\beta>  \alpha>0$ such that
$$
\alpha B(f) \le T(f) \le  \beta B(f) \quad \text{ for all } f \in \ones^\perp.
$$
Therefore
$$
\alpha \le E(f) \le \beta \quad \text{ for all } f \in \F.
$$
If we let
\begin{align}
 L(f)= \|f\|_1=\sum_{i=1}^n |f_i|, \;\; P_{0} f= f-m(f) {\bf1},
 \end{align}
then we see that $B(f)=L(P_{0}f)$. Note that
$
P_{0} =\text{Id}-\frac{1}{n}\ones \ones^{T},
$
so that the matrix $P_{0}$ simply gives the orthogonal projection onto $\ones^\perp$. As $L(f)$ is convex, so is $B(f)=L(P_{0 } f)$, and we also have
$$
\partial B(f)= P_{0 } \;  \text{sign}(P_{0}f).
$$
It is then easy to see that $\langle \partial B(f), {\bf 1} \rangle =0 $ for all $f$.  If $f\in \ones^\perp$, then $B(f)= L(f)$ and $\partial B(f)$ is simply the projection of $\partial L(f)$ on $\ones^\perp$, i.e. $\partial B(f)= P_0 \; \text{sign}(f)$.

Starting from a non-constant function $f$, we define $g$ and $h$ according to Algorithm \ref{alg-ratiocut}
\begin{align}
&{g} =f+ c v, \quad \text{ where } \quad  v \in \partial B (f) \label{algo1} \\
&h= \arg \min_u \left\{ T(u)+ E(f) \frac{ \|u-g\|_2^2}{2  c} \right\} \label{algo2}, \quad
%&f^{k+1}=\frac{h^k}{\|h^k\|_2} \label{algo3}
\end{align}
which we write succinctly as
$$
h\in \A^c(f).
$$
Since $g$ is not uniquely defined when $B(f)$ is non-differentiable, in general $\A^c(f)$ may have more than one element. Therefore the map $\A^c$ is a {\it set-valued map} defined over the space of non-constant functions (see Definition \ref{SVM} in the following subsection).

\subsection{Estimates}

\begin{lemma}[Elementary properties of $\A^c$] \label{properties}  Let  $g$ and $h$ be defined  by \eqref{algo1}--\eqref{algo2}.
\begin{enumerate}
\item If $f$ is not constant, then  $h$ is not constant. Moreover, the energy inequality
\begin{equation} \label{descent}
E(f)  \ge E(h) + \frac{E(f)}{B(h)}\frac{\|h-f\|_2^2}{c} 
\end{equation}
holds. As a consequence, $E(h) < E(f)$ unless $h = f$.
\item If $f$ is not constant, then
\begin{equation} \label{upperbound}
\|h\|_2 \le \|g\|_2 \le \|f\|_2+  2c \sqrt{n}.
\end{equation}
\item If $f \in \real^n$, then  $\|g\|_2 > \|f\|_2$, or, to be more precise:
$$\|g\|_2^2 =
 ||f||_2^2 + 2c B(f) + c^{2} || \partial B(f) ||_2^{2}.
$$
\item If $f \in \F$, then $g,h \in \F$.
\end{enumerate}
\end{lemma}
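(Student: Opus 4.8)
My plan is to derive all four items from the first-order optimality condition of the ROF step \eqref{algo2}. The objective there is strictly convex (because of the quadratic term) and coercive, so $h$ exists and is unique, and stationarity reads
\[
p := \frac{E(f)}{c}\,(g-h)\in\partial T(h),
\]
a relation I would record once and use in every part. I would also exploit that $T$ is a seminorm: convex, positively one-homogeneous, and invariant under adding multiples of $\ones$. Testing the subgradient inequality $T(u)\ge T(h)+\langle p,u-h\rangle$ at $u=2h$ and at $u=0$ yields the Euler identity $\langle p,h\rangle=T(h)$, and translation invariance forces $\langle p,\ones\rangle=0$ for every $p\in\partial T(h)$. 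Two cheap items fall out immediately. For item~3, expanding $\|g\|_2^2=\|f+cv\|_2^2$ and using $\langle f,v\rangle=\langle P_0 f,\text{sign}(P_0 f)\rangle=B(f)$ gives the stated identity $\|f\|_2^2+2cB(f)+c^2\|v\|_2^2$, which is $\ge\|f\|_2^2$. For the right inequality in item~2, the triangle inequality together with the crude bound $\|v\|_2\le 2\sqrt n$ (write $v=s-m(s)\ones$ with $\|s\|_\infty\le1$) yields $\|g\|_2\le\|f\|_2+2c\sqrt n$.

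For the left inequality in item~2 I would combine the optimality relation with the Euler identity: from $\frac{E(f)}{c}\langle g-h,h\rangle=\langle p,h\rangle=T(h)\ge 0$ we get $\langle g,h\rangle\ge\|h\|_2^2$, and Cauchy--Schwarz then gives $\|h\|_2\le\|g\|_2$.

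Item~1 is the heart of the lemma and I expect it to be the main obstacle, for two reasons: one must first rule out $h$ being constant (otherwise $B(h)=0$ and $E(h)$ is undefined), and then chain the subgradient inequalities in the correct directions. To show $h$ is non-constant, I would argue by contradiction. Since $\langle p,\ones\rangle=0$ forces $g-h\in\ones^\perp$, a constant $h$ must equal $m(g)\ones=m(f)\ones$, so that $p=\frac{E(f)}{c}(P_0 f+cv)\in\partial T(0)$, i.e.\ $\langle p,u\rangle\le T(u)$ for all $u$. Testing this at $u=f$ and using $\langle P_0 f,f\rangle=\|P_0 f\|_2^2$ and $\langle v,f\rangle=B(f)$ reduces the inequality to $\|P_0 f\|_2^2\le 0$, impossible since $f$ is non-constant. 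With $B(h)>0$ secured, I would then combine three ingredients: the subgradient inequality for $T$ at $h$, namely $T(f)\ge T(h)+\langle p,f-h\rangle$; the subgradient inequality for $B$ at $f$ with $v\in\partial B(f)$, namely $\langle v,f-h\rangle\ge B(f)-B(h)$; and the identity $T(f)=E(f)B(f)$. Substituting $\langle p,f-h\rangle=\frac{E(f)}{c}\|f-h\|_2^2+E(f)\langle v,f-h\rangle$ and dividing by $B(h)$ produces exactly \eqref{descent}, with strict decrease unless $h=f$ coming from the nonnegative remainder term.

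Finally, item~4 assembles facts already in hand. Since $m(v)=0$ we get $m(g)=m(f)=0$; item~3 gives $\|g\|_2\ge\|f\|_2>0$, so $g\neq 0$ and hence $g\in\F$. For $h$, the relation $g-h\in\ones^\perp$ gives $m(h)=m(g)=0$, while item~1 shows $h$ is non-constant, hence $h\neq 0$, so $h\in\F$. The one delicate point throughout is the correct use of the \emph{direction} of the $B$-subgradient inequality in item~1; everything else is bookkeeping around the single optimality relation.
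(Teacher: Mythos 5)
Your proposal is correct, and its core --- the derivation of the descent inequality \eqref{descent} in item~1 --- is the same computation as the paper's: record the optimality relation $\frac{E(f)}{c}(g-h)\in\partial T(h)$, apply the subgradient inequality for $T$ at $h$ tested against $f$, combine with the subgradient inequality for $B$ at $f$, and use $T(f)=E(f)B(f)$ before dividing by $B(h)$. Where you genuinely diverge is in three supporting steps, and in each case both routes work. For $\|h\|_2\le\|g\|_2$ the paper invokes non-expansiveness of the proximal map (with $\prox_\Phi(0)=0$), whereas you use the Euler identity $\langle p,h\rangle=T(h)\ge 0$ plus Cauchy--Schwarz; yours is self-contained, the paper's is a one-line appeal to standard prox theory. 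For the mean-zero property of $h$ in item~4 the paper runs a variational argument (replacing $h$ by $P_0h$ strictly lowers the ROF objective unless $(\mathrm{Id}-P_0)h=0$), whereas you observe that translation invariance of $T$ forces $\partial T(h)\subset\ones^\perp$, hence $g-h\in\ones^\perp$ and $m(h)=m(g)$; your version is arguably cleaner, since it exposes the structural reason the implicit step preserves the mean. Finally, for non-constancy of $h$ the paper gets it essentially for free: from the already-derived inequality $E(f)B(h)\ge T(h)+\frac{E(f)}{c}\|h-f\|_2^2$, a constant $h$ gives $B(h)=0$, forcing $h=f$, contradicting non-constancy of $f$. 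Your separate contradiction argument (a constant $h$ puts $\frac{E(f)}{c}(P_0f+cv)$ in $\partial T(0)$, and testing against $f$ yields $\|P_0f\|_2^2\le 0$) is valid but does more work than necessary; you could streamline by deriving the combined inequality first, as the paper does, since that derivation never divides by $B(h)$.
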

\begin{proof} (1.) The definition \eqref{algo2} of $h$ implies that
$
E(f) \frac{h-g}{ c} \in - \partial T(h),
$ and therefore,
since $T$ is convex, 
\begin{align}
T(f) &\ge T(h)+ \left\langle -E(f)\frac{h- g }{c}, f-h \right\rangle \quad \\
&= T(h)-E(f)\left\langle \frac{h- ({f}+ c  v) }{c }, f-h \right\rangle \\
 &= T(h)+  \frac{E(f)}{ c }\|h-f\|^2_2 -  E(f)
\left\langle  v , h-f \right\rangle.  \label{T}
\end{align}
Since $B$ is also convex, we have
$
B(h) \ge B(f)+ \langle v, h-f \rangle  \label{B},
$
and therefore adding these two last inequalities,
$$
T(f)+ E(f) B(h) \ge T(h)+ E(f) B(f) + \frac{E(f)}{c }\|h-f\|^2_2.$$
In other words, 
$$
 E(f) B(h) \ge T(h) + \frac{E(f)}{ c }\|h-f\|^2_2.
$$
Since $f$ is not constant, we have  $E(f)>0$.  Note that if $h$ were constant, then $B(h)=0$ which would imply $h=f$. This is a contradiction since $f$ is not constant. Thus $B(h)>0,$ so we may divide in the last expression to obtain \eqref{descent}.

\

\noindent  (2.)  To prove that $\|h\|_2 \le \|g\|_2$, note 
$$
h=\prox_{\Phi}(g) : = \arg \min_u \left\{ \Phi(u)+  \frac{ \|u-g\|_2^2}{2} \right\} \qquad \text{ where } \Phi(u)= \frac{c}{E(f)} T(u).
$$
Since proximal mappings are Lipshitz continuous with constant one, and since $\prox_{\Phi}(0)=0$,  we have
\begin{equation} \label{contract}
\|h\|_2=\|\prox_\Phi(g)-\prox_\Phi(0)\|_2 \le \|g\|_2.
\end{equation}
To establish the inequality $\|g\|_2 \le \|f\|_2+  2c \sqrt{n}$, note that $\|\text{sign}(P_0 f)\|_\infty \le 1$ and therefore 
\begin{equation} \label{boundsubgrad}
\|\partial B(f)\|_2 \le \sqrt{n}\|\partial B(f)\|_\infty= \sqrt{n} \|\text{sign}(P_0 f) - m(\text{sign}(P_0 f)) \ones\|_\infty \le 2 \sqrt{n}  \quad \text{ for all } f \in \real^n.
\end{equation}
The upper bound then follows from the definition of $g$ and the triangle inequality. 

\

\noindent (3.) Since $B$ is homogeneous of degree one, we have
\begin{multline}
	||g||_2^2 
	= ||f+ c \partial B (f)||_2^{2} 
	=  ||f||_2^2 + 2 c \left<f , \partial B (f) \right> +c^{2} || \partial B(f) ||_2^{2} 
	=  ||f||_2^2 + 2c B(f) + c^{2} || \partial B(f) ||_2^{2}.
\label{eq:g-dist}
\end{multline}	

\

\noindent (4.)  Since $\partial B(f) \subset \ones^\perp,$ it is clear  that $f \in \ones^\perp$ implies $g \in \ones^\perp$. Equation \eqref{eq:g-dist} shows that $||g||_2>||f||_2>0$ so that $g$ cannot be constant (the only constant function of mean zero is the zero function). Thus $g \in \F$. Suppose that $h \notin \ones^\perp$. Since $P_0$ projects onto $\ones^\perp$ and since $T(P_0u)=T(u)$ for all $u \in \real^n$ (because $T$ is invariant under addition of a constant), we have 
$$T(h)+\frac{ E(f) }{2 c} \|h-g\|_2^2=T(P_0 h) + \frac{ E(f) }{2 c} \left( \|P_0 h-g\|_2^2+ \|(\mathrm{Id}-P_0 )h\|_2^2 \right).$$
This contradicts the definition of $h$ as the global minimizer unless $(\mathrm{Id}-P_0 )h = 0.$ Thus $h$ has mean zero. By property (1.) we know $h$ is not constant, so $h \in \F$ as well.
\end{proof}

\begin{definition}
Let $f^0 \in \F$. We say that $f^k, g^k,h^k$ is a sequence generated by the algorithm if
$$
f^{k+1} \in  P_{2} (\A^{c}(f^k)) \quad  \text{ where $P_2$ is the projection onto the sphere $\Sn$} 
$$
and where $g^k$ and $h^k$ are defined from $f^k$ by \eqref{algo1} and \eqref{algo2}.
\end{definition}
\begin{lemma}[Properties of the iterates] If $f^k, g^k,h^k$ is a sequence generated by the algorithm, then $E(f^{k+1}) \le E(f^k)$ with equality if and only if $f^{k}=f^{k+1}$. Moreover,
\begin{equation} \label{attractor}
 \|f^k-h^k\|_2 \to 0 \quad \text{and} \quad   \|f^k-f^{k+1}\|_2 \to 0.
 \end{equation}
Therefore $\Sn$ is an attractor for the sequence $\{h^k\}$.
\end{lemma}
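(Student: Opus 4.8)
The plan is to combine the per-step energy inequality \eqref{descent} from Lemma~\ref{properties} with the uniform two-sided bound $\alpha \le E \le \beta$ on $\F$, and then extract a summable quantity from a telescoping sum of the monotone energies.

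First I would record that the iterates never leave $\F$: by part (4.) of Lemma~\ref{properties}, $f^k \in \F$ implies $h^k \in \F$, and normalizing a nonzero mean-zero function keeps it nonzero and mean-zero, so $f^{k+1} = h^k/\|h^k\|_2 \in \F$. Induction from $f^0 \in \F$ then shows every $E(f^k)$ and $E(h^k)$ is defined and lies in $[\alpha,\beta]$. Next, since $T$ and $B$ are both homogeneous of degree one, $E$ is invariant under nonzero scaling, so $E(f^{k+1}) = E(h^k)$. Part (1.) of Lemma~\ref{properties}, with $f=f^k$ and $h=h^k$, then gives $E(f^{k+1}) = E(h^k) \le E(f^k)$, with equality iff $h^k = f^k$; because $f^k$ lies on $\Sn$ for $k \ge 1$ (and we may assume $\|f^0\|_2 = 1$ after rescaling, which changes nothing by scale-invariance), $h^k = f^k$ forces $f^{k+1} = f^k/\|f^k\|_2 = f^k$, while the converse is trivial. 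This establishes the monotonicity and the equality characterization.

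For the decay \eqref{attractor}, I would rewrite the energy inequality as $E(f^k) - E(f^{k+1}) \ge \frac{E(f^k)}{c\,B(h^k)}\|h^k - f^k\|_2^2$ and bound the coefficient below by a positive constant independent of $k$. The numerator is controlled by $E(f^k) \ge \alpha > 0$. For the denominator, since $h^k \in \ones^\perp$ we have $B(h^k) = \|h^k\|_1 \le \sqrt{n}\,\|h^k\|_2$, and part (2.) of Lemma~\ref{properties} together with $\|f^k\|_2 = 1$ gives $\|h^k\|_2 \le 1 + 2c\sqrt{n}$; hence $B(h^k) \le M := \sqrt{n}\,(1 + 2c\sqrt{n})$ uniformly in $k$. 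This is the step I expect to be the crux: the argument only closes because projecting onto $\Sn$ confines the iterates to a fixed bounded region, which keeps $B(h^k)$ bounded above and therefore the coefficient bounded below by the positive constant $\frac{\alpha}{cM}$, yielding $\frac{\alpha}{cM}\|h^k - f^k\|_2^2 \le E(f^k) - E(f^{k+1})$.

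Summing over $k$ telescopes the right-hand side; since $\{E(f^k)\}$ is nonincreasing and bounded below by $\alpha$, it converges, so $\sum_k \big(E(f^k) - E(f^{k+1})\big)$ is finite. Therefore $\sum_k \|h^k - f^k\|_2^2 < \infty$, which forces $\|h^k - f^k\|_2 \to 0$, the first assertion of \eqref{attractor}. For the second, the reverse triangle inequality gives $\big|\,\|h^k\|_2 - 1\,\big| = \big|\,\|h^k\|_2 - \|f^k\|_2\,\big| \le \|h^k - f^k\|_2 \to 0$, so $\|h^k\|_2 \to 1$; writing $f^{k+1} - f^k = \big(h^k/\|h^k\|_2 - h^k\big) + (h^k - f^k)$ and using $\big\|\,h^k/\|h^k\|_2 - h^k\,\big\|_2 = \big|\,1 - \|h^k\|_2\,\big|$, both pieces vanish, so $\|f^{k+1} - f^k\|_2 \to 0$. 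Finally $\mathrm{dist}(h^k, \Sn) = \big|\,\|h^k\|_2 - 1\,\big| \to 0$ shows that $\Sn$ attracts $\{h^k\}$.
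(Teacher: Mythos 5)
Your proof is correct and takes essentially the same route as the paper: both rest on the descent inequality \eqref{descent}, the uniform lower bound $E(f^k)\ge\alpha$, the bound $B(h^k)\le\sqrt{n}(1+2c\sqrt{n})$ coming from \eqref{upperbound}, and convergence of the monotone energy sequence. The only cosmetic differences are that you deduce $\|h^k-f^k\|_2\to 0$ from summability of the telescoped series rather than directly from $E(f^k)-E(f^{k+1})\to 0$, and you prove $\|f^{k+1}-f^k\|_2\to 0$ by the explicit decomposition $f^{k+1}-f^k=\bigl(h^k/\|h^k\|_2-h^k\bigr)+(h^k-f^k)$ instead of invoking Lipschitz continuity of $P_2$ on an annulus, which is a slightly more elementary way to close the same step.
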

\begin{proof} The fact that the energy decreases is a consequence of \eqref{descent} from Lemma \ref{properties} together with the fact that $E(f^{k+1})=E(h^k)$ due to the invariance of $E$ under scaling. As $f^k \in \ones^{\perp}$ and $||f^k||_2 = 1$ it follows that $E(f^k) \geq \alpha > 0$. From \eqref{descent} we then have
\begin{equation} \label{descent2}
 {\|h^k-f^k\|_2^2}  \le\frac{c}{\alpha}  B(h^k) (E(f^k)-E(f^{k+1})).
\end{equation} 
Now from \eqref{upperbound} we have 
$$B(h^k)=\|h^k\|_1 \le \sqrt{n} \|h^k\|_2 \le  \sqrt{n}+ 2nc,$$ and therefore
$$
{\|h^k-f^k\|_2^2}  \le \frac{c}{\alpha} (\sqrt{n}+ 2nc) (E(f^k)-E(f^{k+1})) \to 0 ,$$
where we have used that $E(f^k)$ is a converging sequence since it is decreasing and bounded from below.

We now show  $\|f^k-f^{k+1}\|_2 \to 0$. Note that the projection $P_2$ is smooth on the annulus  $\mathcal{A} := \{u \in \real^n: 1/2 \le \|u\| \le 3/2\}$ and therefore it is Lipschitz continuous on $\mathcal{A}$ with constant, say, $C$. Since eventually $h^k \in \mathcal{A},$ we have
$$
\|f^k-f^{k+1}\|_2= \| P_2(f^k)-P_2(h^k) \|_2 \le C \|f^k-h^k\|_2 \to 0.
$$  
\end{proof}

\subsection{Proof of convergence}
\begin{definition}[Set-valued map] \label{SVM} Let $X$ and  $Y$ be two subsets of $\real^n$. If for each $x\in X$ there is a corresponding set $F(x)\subset Y$  then $F$ is called a set-valued map from $X$ to $Y$ . We denote this by
$F: X \rightrightarrows Y$. The graph of $F$, denoted $\text{Graph(F)}$ is defined by
$$
\text{Graph}(F)= \{ (x,y) \in \real^n \times \real^n: y \in F(x), x \in X\}.
$$ 
A set-valued map $F$ is called closed if $\text{Graph}(F)$ is a closed subset of $\real^n \times \real^n$.
\end{definition}

Define the compact sets
\begin{align}
&K_1= \{ u \in \real^n: \|u\|_2=1 \text{ and } m(u)=0\} \\
&K_2 = \{ u \in \real^n: 1 \le \|u\|_2\le 1+2 c\sqrt{n} \text{ and } m(u)=0 \} %\\
%& K_3=  \{ f \in \real^n: 1/2 \le \|f\|_2\le 3/2  \text{ and } m(f)=0 \}, 
\end{align}
along with the set-valued map $\Se^c: K_1 \rightrightarrows  K_2 $
$$
\Se^c(f)=f+c  \partial B (f).
$$
The fact that the range of $\Se^c$ is in $K_2$ is a consequence of \eqref{upperbound}.
\begin{lemma}
The set-valued map $\Se^c$ is closed.
\end{lemma}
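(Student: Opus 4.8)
The plan is to verify the sequential characterization of closedness directly: I will take an arbitrary sequence $(f^j, g^j) \in \text{Graph}(\Se^c)$ with $f^j \to f$ and $g^j \to g$ in $\real^n$, and show that the limit pair satisfies $f \in K_1$ and $g \in \Se^c(f)$. Membership $f \in K_1$ is immediate, since $K_1$ is compact (closed and bounded) and each $f^j$ lies in it. The substance of the argument therefore reduces to the inclusion $g \in f + c\,\partial B(f)$.

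To that end I would extract the subgradients explicitly. Since $g^j \in \Se^c(f^j) = f^j + c\,\partial B(f^j)$, write $v^j := (g^j - f^j)/c$, so that $v^j \in \partial B(f^j)$ by construction. Because $f^j \to f$ and $g^j \to g$, the sequence $v^j$ converges to $v := (g - f)/c$. Thus it suffices to establish that $v \in \partial B(f)$, for then $g = f + c v \in \Se^c(f)$ and $(f,g) \in \text{Graph}(\Se^c)$, which is exactly what closedness demands. In other words, the lemma follows once I prove that the subdifferential map $\partial B$ has closed graph.

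The core step is the limit passage in the subgradient inequality. For each $j$, the defining property $v^j \in \partial B(f^j)$ gives
$$
B(u) \ge B(f^j) + \langle v^j, u - f^j \rangle \qquad \text{for all } u \in \real^n.
$$
Now $B(f) = L(P_0 f) = \|P_0 f\|_1$ is a finite convex function on all of $\real^n$, hence continuous, so $B(f^j) \to B(f)$. Letting $j \to \infty$ in the displayed inequality, using $f^j \to f$ and $v^j \to v$ together with the continuity of the inner product, yields $B(u) \ge B(f) + \langle v, u - f \rangle$ for every $u$, which is precisely the statement $v \in \partial B(f)$. This completes the reduction described above.

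I do not expect a genuine obstacle here; the proof is essentially the standard fact that the subdifferential of a finite convex function is outer semicontinuous. The one point requiring care, and where a naive approach could go astray, is that one must \emph{not} try to argue through the explicit formula $\partial B(f) = P_0\,\text{sign}(P_0 f)$, since the set-valued map $f \mapsto \text{sign}(P_0 f)$ is not continuous as a single-valued object; the correct and robust route is the variational (subgradient) inequality, whose linear-in-$u$ form passes cleanly to the limit. It is also worth noting that no boundary or domain issues arise, because $B$ is finite and convex on the whole of $\ones^\perp$, so $\partial B(f)$ is nonempty for every $f \in K_1$.
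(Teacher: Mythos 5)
Your proof is correct, but it takes a genuinely different route from the paper. You prove that $\partial B$ has closed graph abstractly, by passing to the limit in the subgradient inequality $B(u) \ge B(f^j) + \langle v^j, u - f^j \rangle$ and using the continuity of the finite convex function $B$; this is the standard outer-semicontinuity argument for subdifferentials, and every step checks out (the identification $v^j = (g^j - f^j)/c \in \partial B(f^j)$, the convergence $v^j \to v$, and the limit passage are all valid). The paper instead works through the explicit formula $\partial B(f) = P_0\,\mathrm{sign}(f)$ valid for $f \in K_1$: it first proves that the set-valued map $\mathrm{sign} : \real^n \rightrightarrows [-1,1]^n$ is closed by a coordinatewise case analysis, then extracts a convergent subsequence of the sign vectors $z^k \in [-1,1]^n$ (using compactness of the cube) and combines closedness of $\mathrm{sign}$ with continuity of $P_0$. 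Your approach buys generality and brevity --- it works verbatim for any finite convex function and never needs the structure of $\partial B$ --- while the paper's approach is more concrete and yields, as a byproduct, the closedness of the sign map, which mirrors how Algorithm 1 actually selects $w^k \in \mathrm{sign}(f^k)$. One small quibble: your closing warning that arguing ``through the explicit formula'' would go astray is overstated. The paper does exactly that, and it works, precisely because $\mathrm{sign}$ is treated as a closed set-valued map with compact values (so a subsequence of sign vectors converges); what would fail is only the pretense that a single-valued selection of $\mathrm{sign}$ varies continuously.
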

\begin{proof} 
Let us first show that the set-valued map $\text{sign}: \real^n \rightrightarrows [-1,1]^n$ is closed.
Let assume that
\begin{align}
& f^k \to f^* \\
& z^k \in \text{sign}(f^k) \to z^* \label{gagaoo}
\end{align}
We want to show that $z^*\in \text{sign}(f^*)$, or equivalently, $z^*_i\in \text{sign}(f^*_i)$ for all $1\le i \le n$. If $f_i^*>0$ then $f^k_i>0$ for $k$ large enough. As $z^k_i=1$ for all such $k$ it follows that $z^*_i=1=\text{sign}(f^*_i)$. Similar reasoning applies if $f_i^*<0$. Lastly, if $f_i^*=0$ then $\text{sign}(f_i^*)=[-1,1]$. The entire sequence $\{z^k_i\}_{k=1}^\infty$ therefore lies in  $\text{sign}(f_i^*),$ so obviously $z^*_i \in \text{sign}({f^*_i})$ as well. %Since $\partial B (f)= P_{0}  \; \text{sign}(f)$ whenever $f \in K_{1}.$ this implies also that $\partial B: K_1 \rightrightarrows \real^n$ is closed.

To show that $\Se^c$ is closed, assume first that
\begin{align}
& f^k \to f^* \\
& g^k \in \Se^c(f^k)=f^k+c\; P_0 \;  \text{sign}(f^k) \to g^* \label{gaga},
\end{align}
where we have used the fact that $\partial B (f)= P_{0}  \; \text{sign}(f)$ whenever $f \in K_{1}$.
Thus our goal is to prove that $g^* \in \Se^c(f^*)$. Clearly there exists $z^k \in \text{sign}(f^k)$ such that
\begin{equation}\label{jospin}
 g^k =f^k+cP_0  z^k.
\end{equation}
Since $z^k$ lies in a compact set  there exists a subsequence $z^{k_i} \to z^*$. So we have
\begin{align}
& f^{k_i} \to f^* \\
& z^{k_i} \in  \text{sign}(f^{k_i}) \to z^*
\end{align}
Since $\text{sign}$ is closed $z^* \in \text{sign}(f^*)$, which combines  with \eqref{jospin} gives
%with the fact that $f^*\in K_1$ to give $P_0 z^* \in \partial B(f^*)$. Therefore
$$
g^{k_i} \to f^*+ c P_0 z^* \in \Se^{c}(f^*)
$$
where we have used the definition of $\Se^c(f^*)$ and the fact that $f^* \in K_1$.  From \eqref{gaga} we then obtain $g^* \in  \Se^{c}(f^*)$ as desired.
\end{proof}
We define the function $\Psi^c: K_1 \times K_2 \to \real^d  $
$$
\Psi^c(f,g) = \arg \min_u \left\{ T(u)+ E(f) \frac{ \|u-g\|_2^2}{2  c} \right\} 
$$

\begin{lemma}
The function $\Psi^c$ is continuous on $K_1\times K_2$.
\end{lemma}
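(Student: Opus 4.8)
The plan is to exploit three facts about the minimization problem defining $\Psi^c$: its solution is \emph{unique}, its range lies in a \emph{compact} set, and its graph is \emph{closed}. Together these force continuity. First I would note that for fixed $f \in K_1$ the objective $u \mapsto T(u)+E(f)\frac{\|u-g\|_2^2}{2c}$ is strongly convex: since $E(f)\ge \alpha>0$ on $K_1$, the quadratic term contributes positive curvature $E(f)/c$ while $T$ is convex, so the minimizer exists and is unique. Thus $\Psi^c$ is genuinely single-valued. Next, by the contraction estimate in Lemma \ref{properties}(2) we have $\|\Psi^c(f,g)\|_2 \le \|g\|_2 \le 1+2c\sqrt n$ for $(f,g)\in K_1\times K_2$, and the mean-zero argument of Lemma \ref{properties}(4)---which uses only $m(g)=0$, valid on $K_2$---shows $\Psi^c(f,g)\in\ones^\perp$. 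Hence the image of $\Psi^c$ lies in the compact set $\{u\in\ones^\perp:\|u\|_2\le 1+2c\sqrt n\}$.

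The heart of the argument is a closed-graph step via the variational inequality. Suppose $(f^k,g^k)\to(f^*,g^*)$ in $K_1\times K_2$ and that $h^k:=\Psi^c(f^k,g^k)\to h^*$. Optimality of $h^k$ gives
$$
T(h^k)+E(f^k)\frac{\|h^k-g^k\|_2^2}{2c}\le T(u)+E(f^k)\frac{\|u-g^k\|_2^2}{2c}\qquad\text{for every }u.
$$
I would then pass to the limit $k\to\infty$. Because $T$ is a seminorm on $\real^n$ it is continuous, so $T(h^k)\to T(h^*)$, and $\|h^k-g^k\|_2\to\|h^*-g^*\|_2$, $\|u-g^k\|_2\to\|u-g^*\|_2$. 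The only quantity needing care is $E(f^k)$: writing $E=T/B$ with $B(f)=\|f\|_1\ge\|f\|_2=1$ on $K_1$, the denominator stays bounded away from zero, so $E$ is continuous on $K_1$ and $E(f^k)\to E(f^*)\ge\alpha>0$. The inequality therefore passes to the limit, yielding
$$
T(h^*)+E(f^*)\frac{\|h^*-g^*\|_2^2}{2c}\le T(u)+E(f^*)\frac{\|u-g^*\|_2^2}{2c}\qquad\text{for every }u,
$$
so $h^*$ minimizes the objective attached to $(f^*,g^*)$. By uniqueness $h^*=\Psi^c(f^*,g^*)$, which is exactly closedness of the graph.

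Finally I would upgrade closedness to continuity using compactness. Given $(f^k,g^k)\to(f^*,g^*)$, the images $h^k=\Psi^c(f^k,g^k)$ all lie in a fixed compact set, so every subsequence has a further subsequence converging to some limit; by the closed-graph step each such limit equals the single value $\Psi^c(f^*,g^*)$. Since all subsequential limits coincide, the whole sequence $\{h^k\}$ converges to $\Psi^c(f^*,g^*)$, giving sequential---hence, in this finite-dimensional metric setting, ordinary---continuity. I expect the only delicate point to be the joint limit: one must confirm $E(f^k)\to E(f^*)$ with a \emph{strictly positive} limit, so that the limiting problem remains strongly convex and its minimizer unique. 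The uniform lower bound $E\ge\alpha$ on $K_1$ is precisely what makes both this and the subsequence argument go through.
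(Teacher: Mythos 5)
Your proof is correct, but it takes a genuinely different route from the paper's. The paper argues quantitatively: from the optimality conditions $E(f)\frac{h-g}{c}\in-\partial T(h)$ and $E(f')\frac{h'-g'}{c}\in-\partial T(h')$ it adds the two convexity inequalities and applies Cauchy--Schwarz to obtain the explicit stability estimate
\[
\|h'-h\|_2 \;\le\; \|g'-g\|_2+\frac{|E(f')-E(f)|}{E(f)}\,2\|g'\|_2,
\]
after which continuity follows immediately from the continuity of $E$ on $K_1$. You instead run a soft argument: single-valuedness (strong convexity), a compact range (the prox contraction $\|h\|_2\le\|g\|_2$ together with the bound defining $K_2$), and a closed graph (passing to the limit in the defining variational inequality for each fixed competitor $u$), after which the standard subsequence argument upgrades closedness plus compactness to continuity. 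Both are sound; your appeal to Lemma \ref{properties} is legitimate, since the contraction and mean-zero arguments there use nothing about $g$ beyond $m(g)=0$, which holds on $K_2$. What the paper's route buys is more than the lemma asserts: an explicit modulus of continuity, showing in particular that $g\mapsto\Psi^c(f,g)$ is nonexpansive for each fixed $f$ --- the kind of quantitative control one would want if convergence rates were ever at issue. What your route buys is economy and generality: it needs only the minimality inequality rather than subdifferential bookkeeping, and it transfers to settings where explicit estimates are awkward; the price is that it yields no rate. You are also right to isolate the uniform bound $E\ge\alpha>0$ on $K_1$ as the crux --- the paper's proof depends on it just as much, since it divides by $E(f)$, though it does not call attention to this.
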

\begin{proof}
Let $h=\Psi^c(f,g)$ and $h'=\Psi^c(f',g')$. Then we have
$
E(f) \frac{h-g}{c} \in - \partial T(h)$ and   $E(f') \frac{h'-g'}{c} \in - \partial T(h')
$
so 
\begin{align*}
&T(h') \ge T(h)- \left\langle E(f) \frac{h-g}{c}, h'-h \right\rangle  \\
&T(h) \ge T(h')- \left\langle E(f') \frac{h'-g'}{c}, h-h' \right\rangle.  
\end{align*}
By adding these two inequalities,
\begin{equation*}
\left\langle E(f) (h-g)-  E(f') {(h'-g'}), h-h' \right\rangle  \le 0.
\end{equation*}
Adding and subtracting  we get 
\begin{align*}
&\Big\langle E(f) (h-g)-  E(f) {(h'-g'}), h-h' \Big\rangle 
+\Big\langle (E(f)-E(f')) (h'-g'), h-h' \Big\rangle
\le 0 \\
&E(f)\Big\langle  (h-h')-  {(g-g'}), h-h' \Big\rangle 
+(E(f)-E(f'))\Big\langle h'-g', h-h' \Big\rangle
\le 0 \\
&E(f) \left( \|h-h'\|^2_2-\Big\langle  g-g', h-h' \Big\rangle \right)
+(E(f)-E(f'))\Big\langle h'-g', h-h' \Big\rangle
\le 0 \\
& \|h-h'\|^2_2 \le \Big\langle  g-g', h-h' \Big\rangle 
- \frac{(E(f)-E(f'))}{E(f)}\Big\langle h'-g', h-h' \Big\rangle
\end{align*} 
From Cauchy-Schwarz we have 
$$
\|h'-h\|_2 \le \|g'-g\|_2+ \frac{|E(f')-E(f)|}{E(f)} \;  \|h'-g'\|_2 \le \|g'-g\|_2+ \frac{|E(f')-E(f)|}{E(f)}2 \|g'\|_2
$$
The last inequality follows from \eqref{contract}. 
We then easily conclude that if $(f',g') \to (f,g)$ then $h' \to h$, due to the continuity of $E$ on $K_1$.
\end{proof}

We next show that the set-valued map $\A^c: K_1 \rightrightarrows  \F $
$$
\A^c(f)=\Psi^c(f, \Se^c(f))
$$
is closed. The fact that the range of $\A$ is in $\F$ is a consequence of Lemma \ref{properties}.
\begin{lemma}
The set-valued map $\A^c$ is closed.
\end{lemma}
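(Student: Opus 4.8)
The plan is to recognize $\A^c$ as the composition of the closed set-valued map $\Se^c$ (whose range lies in the \emph{compact} set $K_2$) with the continuous function $\Psi^c$, and to verify closedness directly from the definition via a subsequence argument. Concretely, I would take an arbitrary convergent sequence in $\text{Graph}(\A^c)$, say $f^k \to f^*$ in $K_1$ together with $h^k \in \A^c(f^k)$ and $h^k \to h^*$, and aim to show $h^* \in \A^c(f^*)$.

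First, unwinding the definition $\A^c(f^k) = \Psi^c(f^k, \Se^c(f^k))$, for each $k$ there is some $g^k \in \Se^c(f^k) \subset K_2$ with $h^k = \Psi^c(f^k, g^k)$. Since $K_2$ is compact, the sequence $\{g^k\}$ admits a convergent subsequence $g^{k_i} \to g^*$ with $g^* \in K_2$. Along this subsequence I then have $f^{k_i} \to f^*$ and $g^{k_i} \in \Se^c(f^{k_i})$ with $g^{k_i} \to g^*$; because $\Se^c$ is closed, this yields $g^* \in \Se^c(f^*)$. This is exactly the same compactness trick used in the proof that $\Se^c$ is closed, where a subsequence was extracted from the compact cube $[-1,1]^n$.

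Next I would invoke continuity. Since $\Psi^c$ is continuous on $K_1 \times K_2$ and $(f^{k_i}, g^{k_i}) \to (f^*, g^*)$, we get $h^{k_i} = \Psi^c(f^{k_i}, g^{k_i}) \to \Psi^c(f^*, g^*)$. But $h^{k_i}$ is a subsequence of the convergent sequence $h^k \to h^*$, so $h^* = \Psi^c(f^*, g^*)$. Combining with $g^* \in \Se^c(f^*)$ gives $h^* = \Psi^c(f^*, g^*) \in \Psi^c(f^*, \Se^c(f^*)) = \A^c(f^*)$, which is precisely the closedness of $\A^c$.

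There is no serious obstacle here: the argument is a routine ``closed map composed with continuous map'' result. The one point that genuinely matters — and the only place where care is required — is the passage to a subsequence of the intermediate points $g^k$. Closedness of $\A^c$ cannot be concluded without the compactness of the range $K_2$ (guaranteed by \eqref{upperbound}), since it is precisely this compactness that lets me extract $g^{k_i} \to g^*$ while keeping the limit inside $K_2$, the domain on which $\Psi^c$ was shown to be continuous and on which $\Se^c$ was shown to be closed.
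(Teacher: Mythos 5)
Your proof is correct and is essentially identical to the paper's own argument: both extract a convergent subsequence $g^{k_i} \to g^*$ from the compact set $K_2$, use the closedness of $\Se^c$ to place $g^*$ in $\Se^c(f^*)$, and then invoke the continuity of $\Psi^c$ to conclude $h^* = \Psi^c(f^*,g^*) \in \A^c(f^*)$. Your closing remark correctly identifies the compactness of $K_2$ as the crucial ingredient, which is exactly the role it plays in the paper.
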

\begin{proof}
Suppose that
\begin{align}
&f^k \to f^* \label{pipi}\\
&h^k \in \A^c(f^k)=\Psi^c(f^k, \Se^c(f^k)) \to h^*  \label{zizi}.
\end{align}
We must show that  $h^* \in \A^c(f^*)$.  Clearly there exist $g^k \in \Se^c(f^k)$ such that
$$
h^k = \Psi^c(f^k, g^k).
$$
Since the sequence $g^k$ is in the compact set $K_2$ there exists $g^* \in K_2$ and a subsequence 
$g^{k_i} \to g^*.$
So we have
\begin{align}
&f^{k_i} \to f^* \label{pipi2}\\
&g^{k_i} \in \Se^c(f^{k_i}) \to g^*  \label{zizi2},
\end{align}
from which we conclude that $g^* \in \Se^c(f^*)$ because $\Se^c$ is closed. Now since
$\Psi^c$ is continuous we have 
$$
h^{k_i} = \Psi^c(f^{k_i}, g^{k_i}) \to \Psi^c(f^*,g^*) \in \Psi^c(f^*, \Se^c(f^*)) = \A^c(f^*).
$$
But $h^{k_i} \to h^*,$ so we may conclude $h^* \in \A^c(f^*)$ as desired.
\end{proof}

\begin{definition}[Critical points]\label{def:critical_point}
Let $f \in \F.$ We say that $f$ is a critical point of the energy $E(f)$ if there exist $w \in \partial T(f)$ and $v \in \partial B(f)$ so that
$$
0 = w - E(f) v.
$$
If both $T$ and $B$ are differentiable at $f$ then the subdifferentials $\partial T(f), \partial B (f)$ are single-valued, so we recover the usual quotient-rule
$$
0 = \nabla T (f) - E(f) \nabla B(f).
$$
\end{definition}

\begin{theorem}[Convergence of the algorithm] \label{conv-thm}
Take $f^0 \in \F$ and fix a constant $c>0$. Let $\{f^k\}_{k=0}^{+\infty}  \subset \F$ be a sequence generated by the algorithm. Then
\begin{enumerate}
\item Any accumulation point $f^*$ of the sequence is a critical point of the energy.
\item Either the sequence converges, or the set of accumulation points is a connected subset of $\Sn$.
\end{enumerate}

\end{theorem}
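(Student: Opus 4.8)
The plan is to handle the two assertions separately, drawing on the closedness of $\A^c$ (the preceding lemma) for part (1) and on the decay estimate $\|f^{k+1}-f^k\|_2 \to 0$ (from the ``Properties of the iterates'' lemma) for part (2). Throughout I use that, since $f^{k+1}=h^k/\|h^k\|_2$, every iterate (for $k\ge 1$) lies in the compact set $K_1$, so accumulation points exist by compactness.

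For part (1), I would fix an accumulation point $f^*$ and extract a subsequence $f^{k_j}\to f^*$. Because $\|h^k-f^k\|_2\to 0$, the matching proximal iterates satisfy $h^{k_j}\to f^*$ as well. Since $h^{k_j}\in\A^c(f^{k_j})$ and $\A^c$ is closed, passing to the limit yields $f^*\in\A^c(f^*)$, i.e. $f^*$ is a fixed point of the step map. It then remains to convert this fixed-point relation into the critical-point condition of Definition~\ref{def:critical_point}. Unwinding $\A^c(f^*)=\Psi^c(f^*,\Se^c(f^*))$, there is a $g^*=f^*+c\,v^*$ with $v^*\in\partial B(f^*)$ such that $f^*=\Psi^c(f^*,g^*)$. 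Writing the (convex) first-order optimality condition for the proximal problem defining $\Psi^c$ gives $0\in\partial T(f^*)+\tfrac{E(f^*)}{c}(f^*-g^*)$, and since $(f^*-g^*)/c=-v^*$ this reads $E(f^*)v^*\in\partial T(f^*)$. Setting $w^*=E(f^*)v^*$ produces $0=w^*-E(f^*)v^*$ with $w^*\in\partial T(f^*)$ and $v^*\in\partial B(f^*)$, which is exactly Definition~\ref{def:critical_point}. Note that $E(f^*)>0$ since $f^*\in K_1\subset\F$, so the division is legitimate.

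For part (2), the engine is the purely topological fact that a sequence in a compact metric space whose consecutive differences tend to zero has a connected accumulation set. I would argue by contradiction. Write $\Omega$ for the (nonempty, closed, hence compact) accumulation set and suppose $\Omega=A\cup B$ with $A,B$ nonempty, compact, disjoint; set $3\epsilon=\mathrm{dist}(A,B)>0$. The open $\epsilon$-neighborhoods $A_\epsilon$ and $B_\epsilon$ are then separated by distance more than $\epsilon$, so once $k$ is large enough that $\|f^{k+1}-f^k\|_2<\epsilon$, the iterates can never pass from $A_\epsilon$ to $B_\epsilon$ in a single step. Since $\Omega$ meets both pieces, the sequence visits $A_\epsilon$ and $B_\epsilon$ infinitely often, and the no-jump property forces it into the compact ``gap'' $K_1\setminus(A_\epsilon\cup B_\epsilon)$ infinitely often, producing an accumulation point there; but every accumulation point lies in $\Omega\subset A_\epsilon\cup B_\epsilon$, a contradiction. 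Hence $\Omega$ is connected. Finally, if $\Omega$ is a single point the bounded sequence converges (a bounded sequence with a unique accumulation point converges to it), and otherwise $\Omega$ is a connected subset of $\Sn$ with more than one point, giving the stated dichotomy.

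The routine part is part (1): once $\A^c$ is known to be closed, the fixed-point extraction and the subgradient bookkeeping that turn $f^*\in\A^c(f^*)$ into the quotient-rule critical-point condition are essentially mechanical. I expect the main obstacle to be the topological argument in part (2) — making precise that the iterates cannot ``tunnel'' across the gap and must therefore accumulate inside it. The only quantitative input it requires is $\|f^{k+1}-f^k\|_2\to 0$, which is already established, so the difficulty lies entirely in setting up the separating neighborhoods and the contradiction cleanly rather than in any new estimate.
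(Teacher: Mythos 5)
Your proof is correct, and it departs from the paper's in two places worth noting. For part (1) both arguments hinge on the closedness of the set-valued map $\A^c$, but the paper takes a longer route to the fixed-point relation $f^* \in \A^c(f^*)$: it extracts a convergent subsequence of $\{f^{k_i+1}\}$, identifies its limit with $f^*$ via $\|f^{k+1}-f^k\|_2 \to 0$, and then must pass through the projection $P_2$ (using its Lipschitz continuity on the annulus and the fact that limit points of $\{h^k\}$ lie on $\Sn$) before closedness can be applied. You instead invoke the other half of the attractor estimate \eqref{attractor}, namely $\|h^k-f^k\|_2 \to 0$, to conclude $h^{k_j} \to f^*$ directly, so $P_2$ never enters; this is a genuine streamlining, and your subsequent unwinding of the proximal optimality condition into $0 = w^* - E(f^*)v^*$ with $w^* \in \partial T(f^*)$ and $v^* \in \partial B(f^*)$ is the same bookkeeping as the paper's. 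For part (2) the paper simply cites Theorem 26.1 of Ostrowski's book, whose hypotheses (bounded iterates, vanishing consecutive differences) are exactly what the lemma supplies, whereas you prove the underlying topological fact from scratch: splitting a putatively disconnected accumulation set into disjoint compact pieces, separating them by $\epsilon$-neighborhoods that no sufficiently late step can jump across, and extracting a spurious accumulation point in the compact gap between them. Your version costs an extra paragraph but makes the theorem self-contained, with no external reference; the paper's citation is terser but leaves the mechanism implicit. Both routes consume exactly the same quantitative inputs, so the difference is one of exposition rather than substance.
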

\begin{proof}
(1.)  The proof is inspired by \cite{art:meyer76}. Let $f^{k_i}$ denote a subsequence converging to $f^*$. Since the sequence $\{f^{k_i+1}\}_{i=1}^{\infty}$ lies in a compact set we can extract a further subsequence (still denoted $\{f^{k_{i}+1}\}$) that converges to some function $f'$.  So we have, as $i\to \infty$
\begin{align}
f^{k_{i}} &\to f^* \\
f^{k_{i}+1} & \to f'.
\end{align}
But, because of \eqref{attractor} it must be that $f^*=f'$.  Thus we have
\begin{align}
f^{k_{i}} &\to f^* \\
f^{k_{i}+1} \in P_2(\A^c(f^{k_{i}})) & \to f^* \label{conv}.
\end{align}
Clearly there exist  $h^{k_{i}} \in \A^c(f^{k_{i}})$ such that $f^{k_{i}+1} = P_2(h^{k_{i}})$. Since the $h^{k_{i}}$ eventually lie in the annulus $\mathcal{A} := \{1/2 \le \|u\|_2 \le 3/2\}$, we can assume (upon extracting another subsequence) that the $h^{k_{i} } \in \mathcal{A} \to h^*\in \mathcal{A}$. Therefore we have
\begin{align}
f^{k_{i}}  &\to f^* \\
 h^{k_{i}} \in \A^c(f^{k_{i}}) & \to h^*
%f^{k_{i}+1} & \to f'\\
 %h^{k_i+1} \in \A^c(f^{k_i+1}) & \to h'
\end{align}
and since $\A^c$ is closed $h^* \in \A^c(f^*)$.
Since $P_{2}$ is continuous in the annulus $\mathcal{A}$ and all limit points of $\{h^{k}\}$ lie on $\Sn$, we conclude that
$$
f^{k_i+1}=P_2(h^{k_i}) \to P_2(h^*) = h^* \in \A^c(f^*).
$$
From \eqref{conv} we therefore have  $f^* \in  \A^c(f^*)$.
By definition of $\A^c(f^*)$, if $f^* \in  \A^c(f^*)$ then there exists $y^* \in \Se^c(f^*)$ so that
$$
f^* = \arg \min_u \left\{ T(u)+ E(f^*) \frac{ \|u-y^*\|_2^2}{2  c} \right\}.
$$ 
Therefore there exists $w^* \in \partial T(f^*)$ so that $0 = c w^* + E(f^*)(f^* - y^*)$. By definition of $\Se^c(f^*)$ there exists $v^* \in \partial B(f^*)$ so that
$$
0 = c w^* + E(f^*)(f^* - (f^* + c  v^*) ) = c(w^{*} - E(f^*)v^*).
$$ 
Thus $f^*$ is a critical point of the energy according to definition \ref{def:critical_point}.

\

(2.) For any sequence generated by the algorithm, $||f^{k+1} - f^{k}||_2 \rightarrow 0$ according to lemma \ref{attractor}. Moreover, they lie in the bounded set $\Sn \subset \mathbb{R}^n$. The hypotheses of Theorem 26.1 of \cite{book:ostrowski} are therefore satisfied, giving the desired conclusion.
\end{proof}

%%%%%%%%%%%%%%%

%%%%%%%%%%%%%%%

\section{Experiments}

We construct the two moons dataset as in \cite{pro:BuhlerHein09pLapla} (Figure \ref{twomoons}). The first moon is a half  circle of radius one in $\real^2$, centered at the origin, sampled with a thousand points; the second moon is an upside down half circle also sampled at a thousand points, but centered at $(1,-1/2)$. The dataset is embedded in $\real^{100}$ by adding Gaussian noise with $\sigma=0.015$. In all experiments we use a $10$ nearest neighbors graph with the self-tuning weights as in \cite{pro:ZelnikPerona04SelfTuning} (the neighbor parameter in the self-tuning is set to $7$ and the universal scaling to $1$). The constant $c$ in Algorithm \ref{alg-ratiocut} is taken to be $c=1/4$.

Clustering results with different initial conditions are shown in Figure \ref{experiments}. Since the energy is not convex there is no guarantee that the algorithm will converge toward the global minimizer of the ratio cut functional. However, for most initial data, the algorithm indeed finds the correct solution in a very small number of iterative steps.

\begin{figure}[!] 
\centering
\subfigure[Two moons dataset]{\includegraphics[width=4cm]{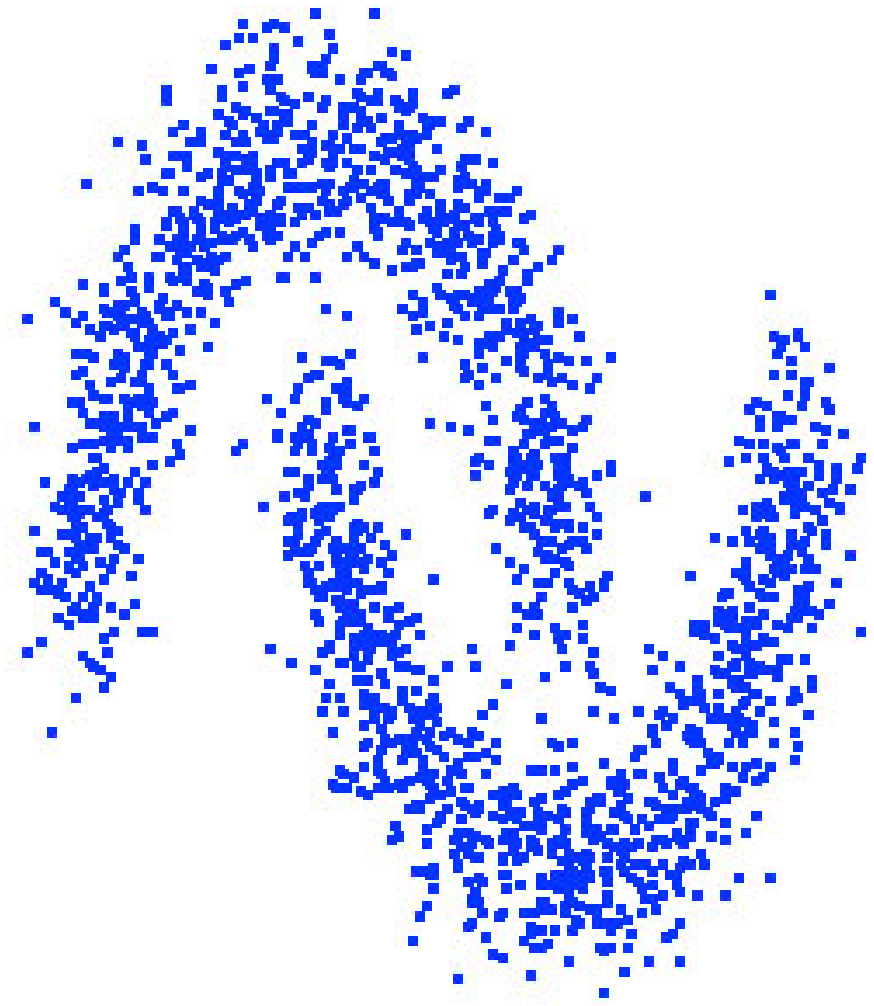}} 
\hspace{0.5cm}
\subfigure[Desired clustering]{\includegraphics[width=4cm]{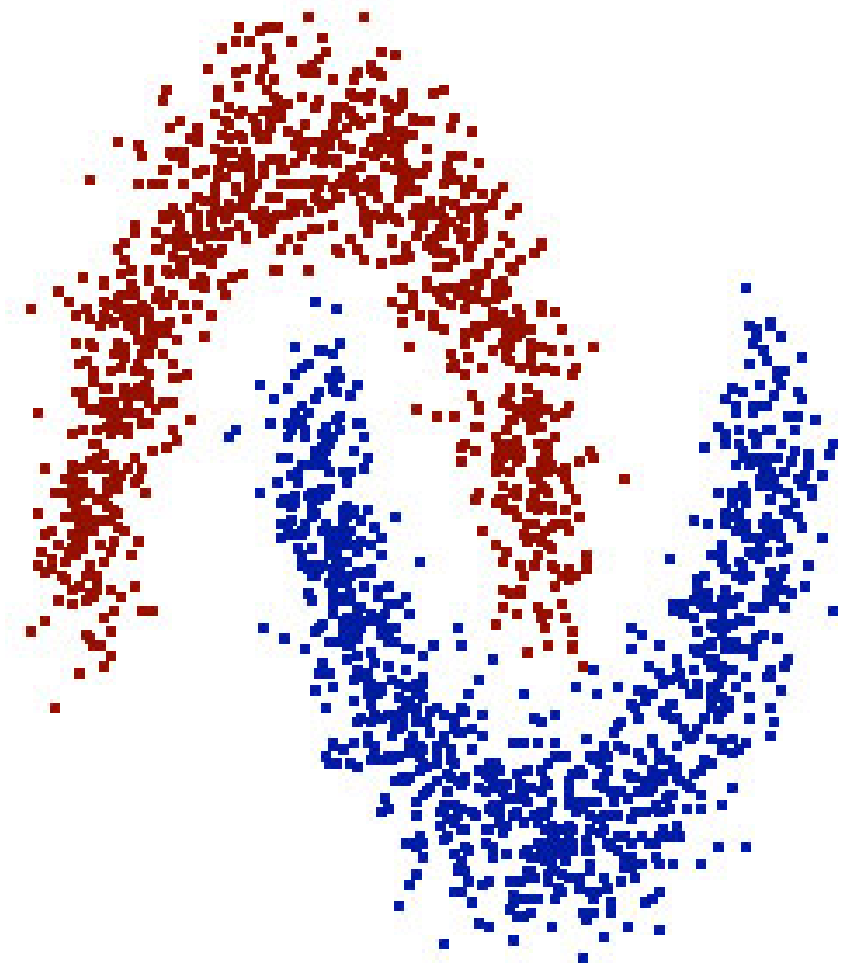}}
\caption{Unsupervised clustering of the two moons dataset. Each moon has 1,000 data points in $\real^{100}$.}
\label{twomoons}
\end{figure}

\begin{figure}[!] 
\centering
\subfigure[Initialization \#1 (2nd eigenvector of graph Laplacian)]{\includegraphics[width=4cm]{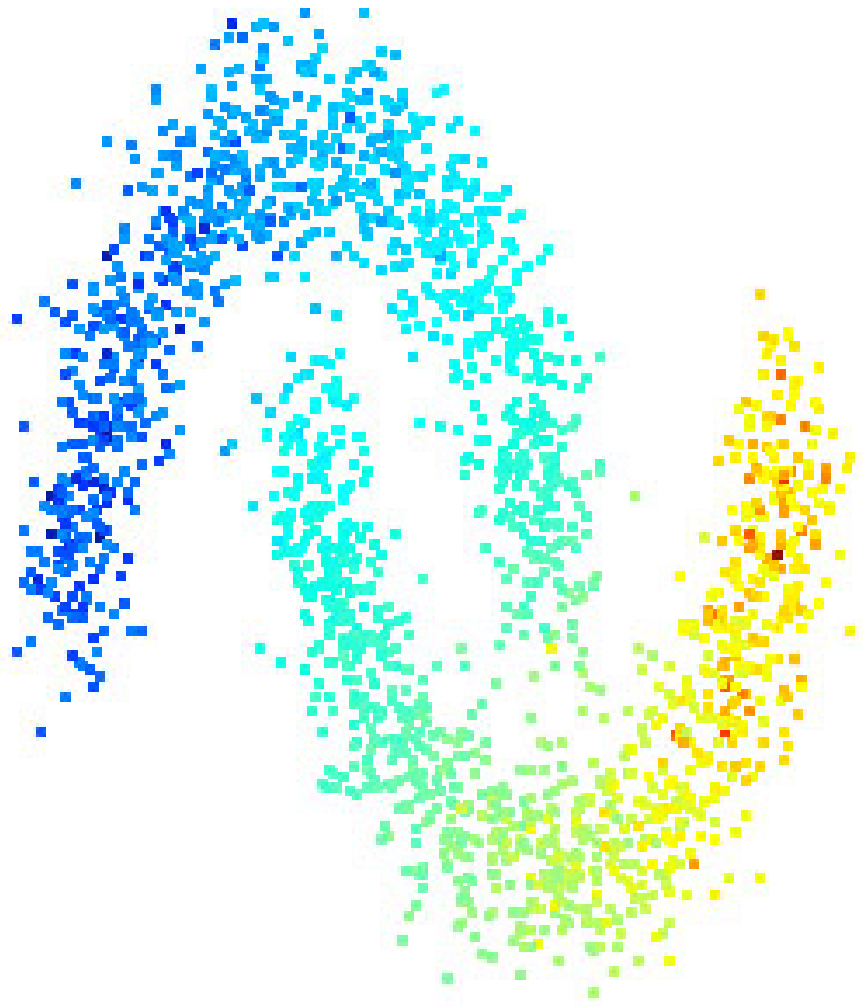}} 
\hspace{0.5cm}
\subfigure[Outcome of Algorithm \ref{alg-ratiocut}]{\includegraphics[width=4cm]{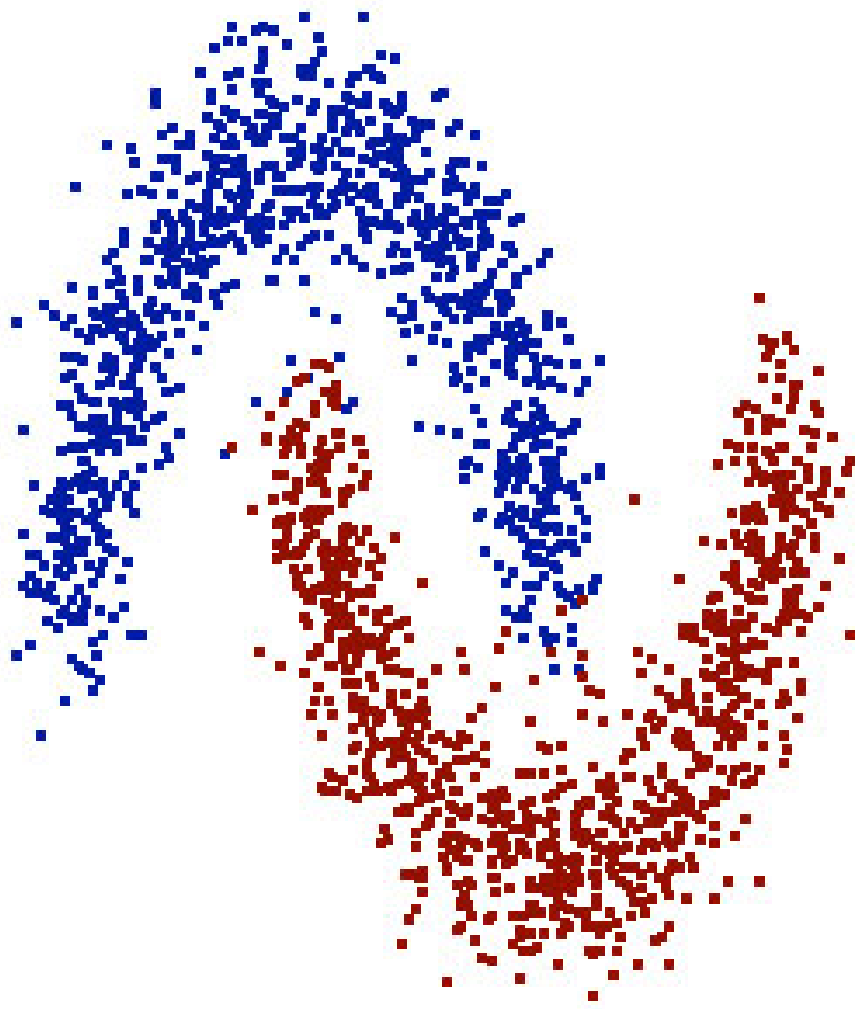}} 
\hspace{0.5cm}
\subfigure[Energy w.r.t. iteration]{\includegraphics[width=4cm]{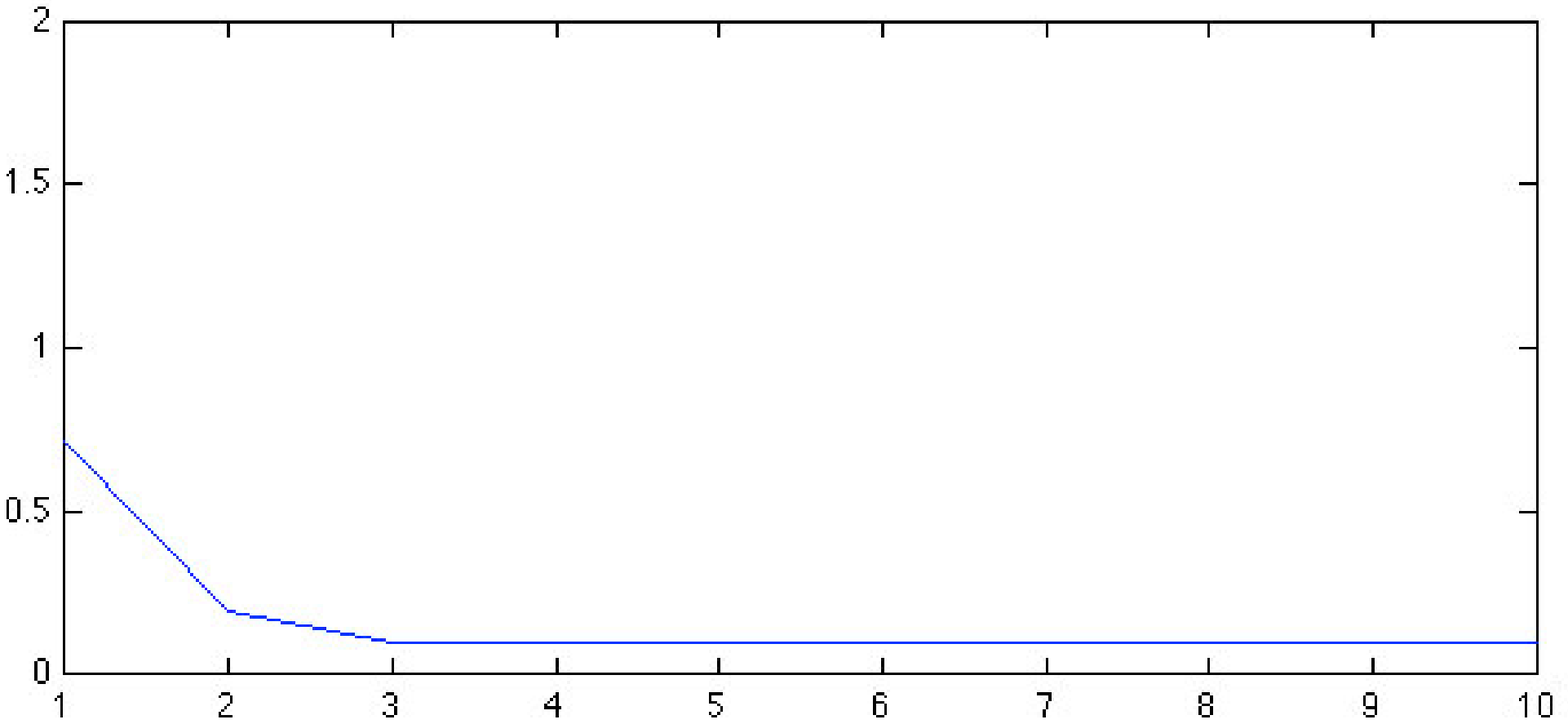}} \\
\subfigure[Initialization \#2 (random init)]{\includegraphics[width=4cm]{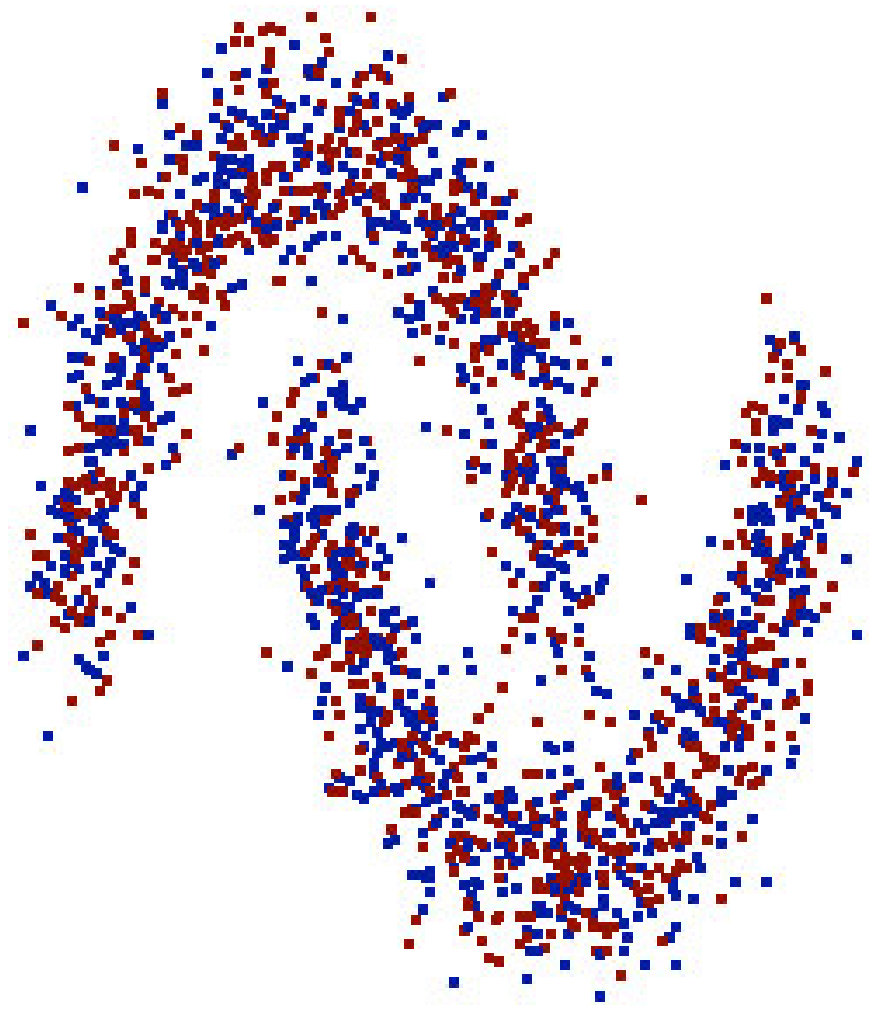}} 
\hspace{0.5cm}
\subfigure[Outcome of Algorithm \ref{alg-ratiocut}]{\includegraphics[width=4cm]{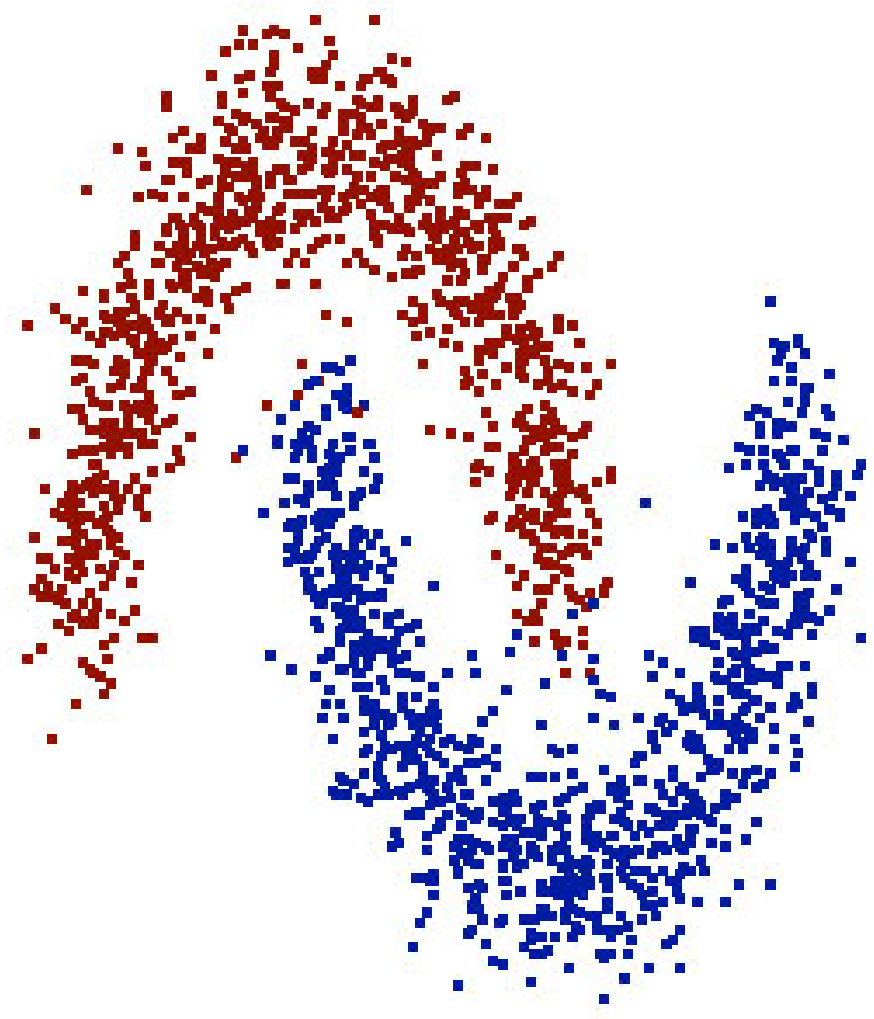}} 
\hspace{0.5cm}
\subfigure[Energy w.r.t. iteration]{\includegraphics[width=4cm]{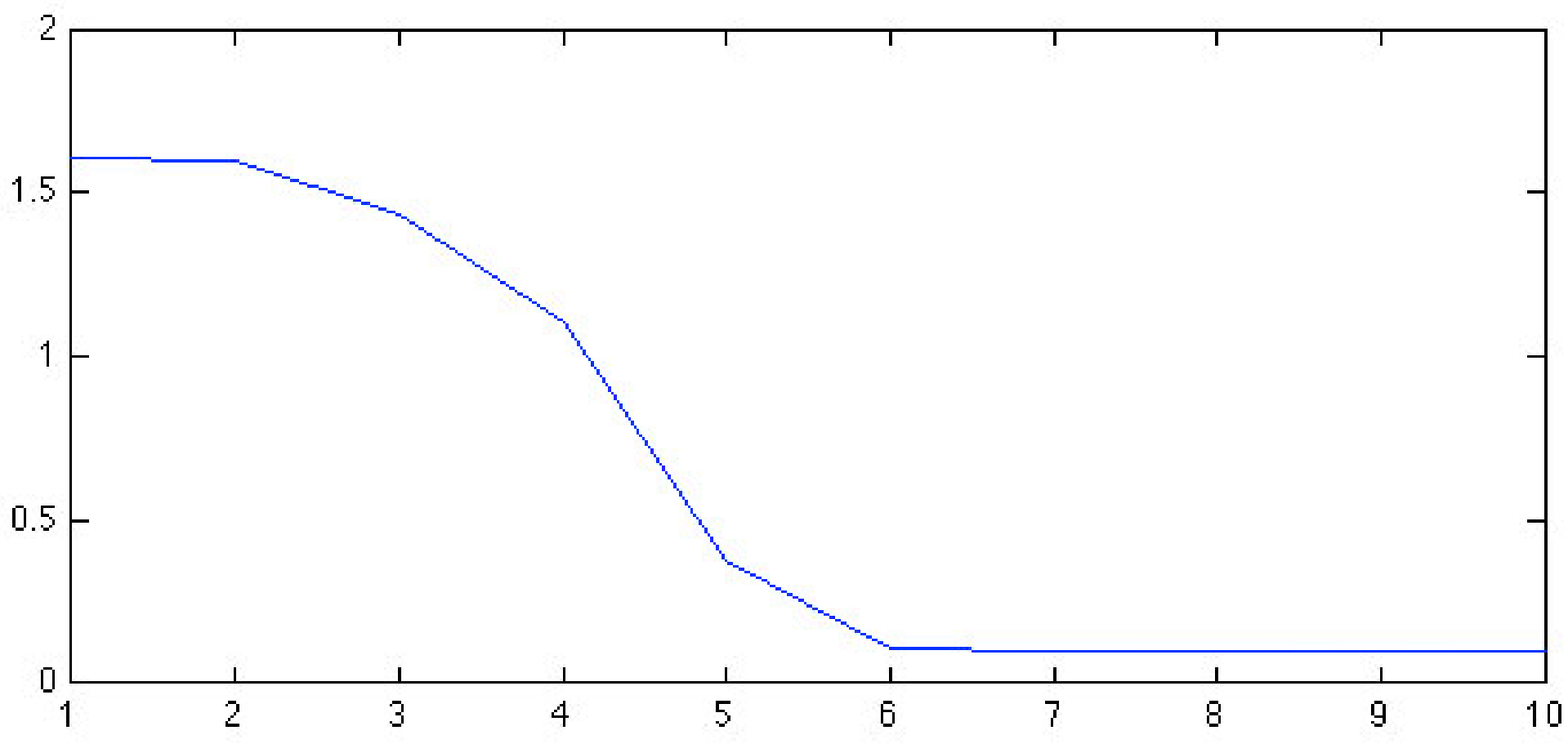}}\\
\subfigure[Initialization \#3 (random init)]{\includegraphics[width=4cm]{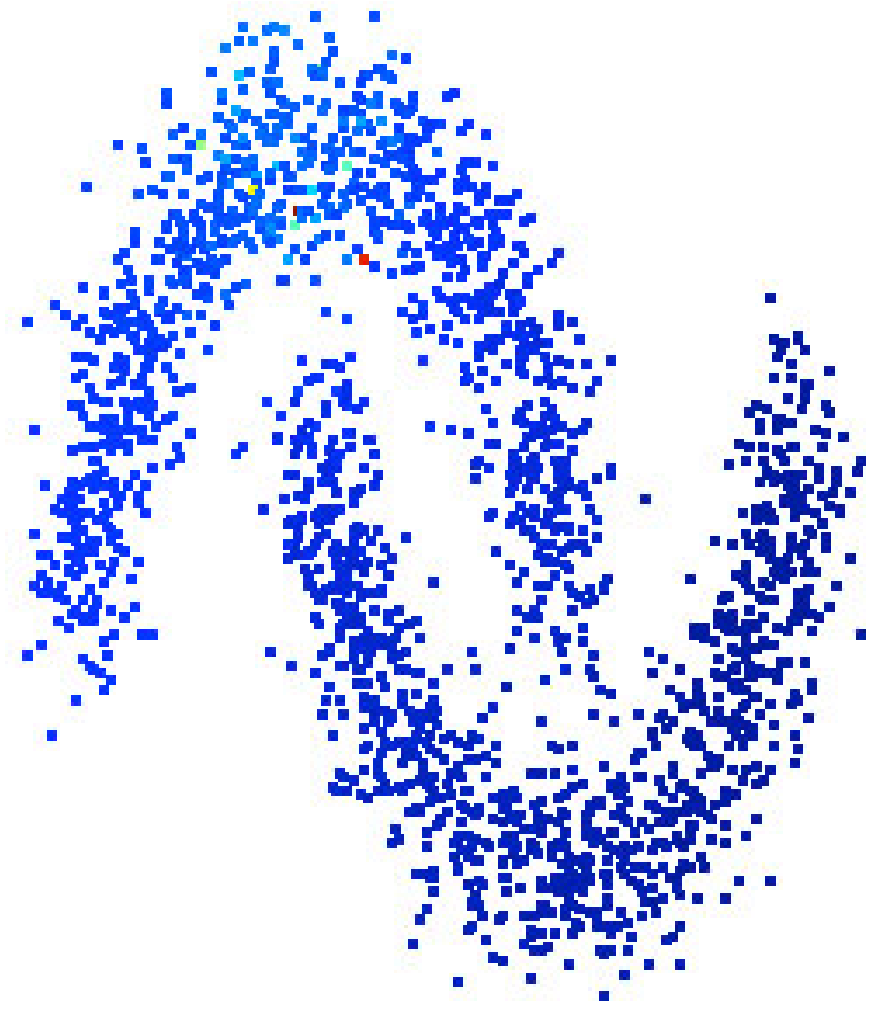}} 
\hspace{0.5cm}
\subfigure[Outcome of Algorithm \ref{alg-ratiocut}]{\includegraphics[width=4cm]{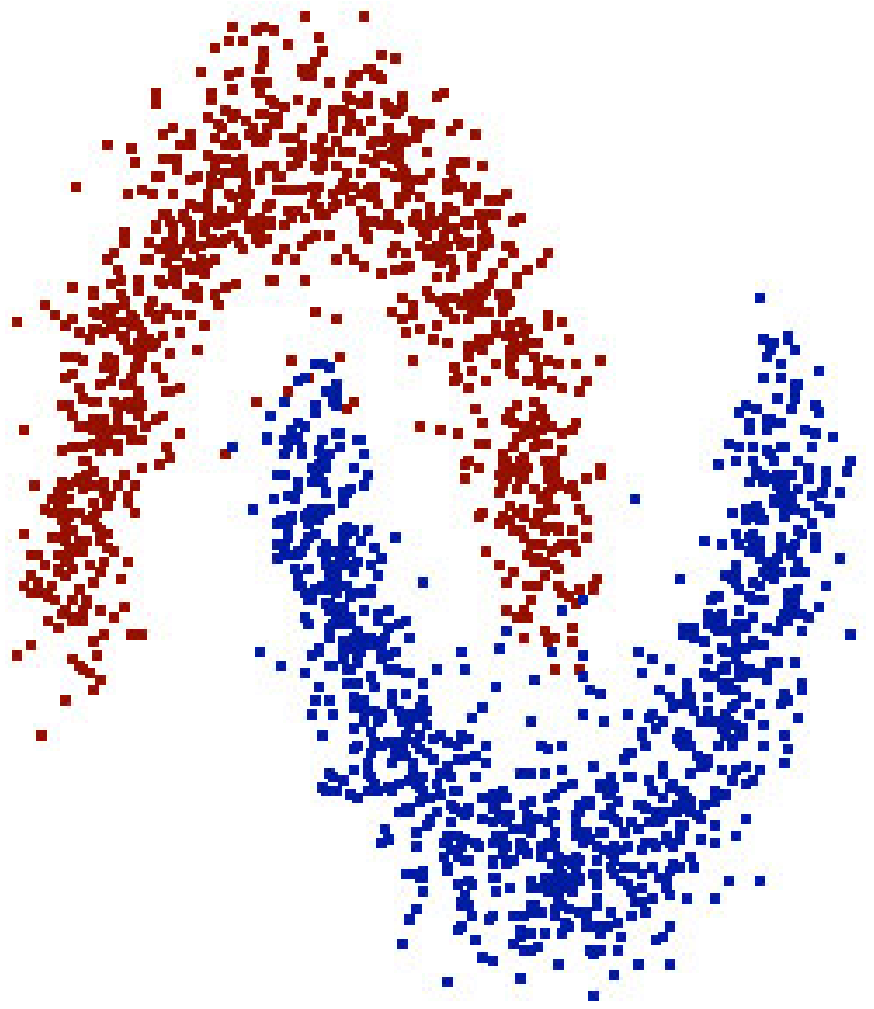}} 
\hspace{0.5cm}
\subfigure[Energy w.r.t. iteration]{\includegraphics[width=4cm]{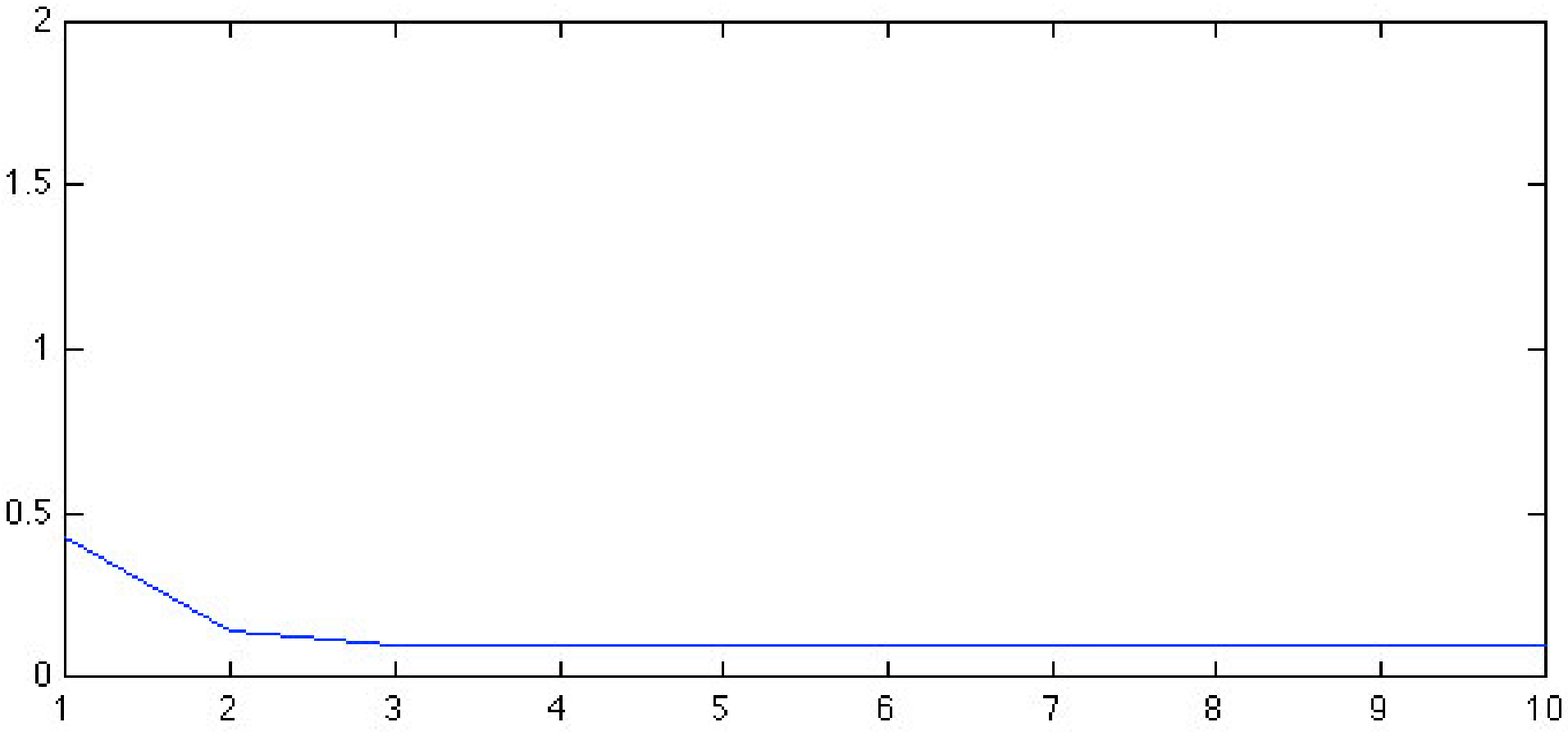}}  \\
\subfigure[Initialization \#4 (random init)]{\includegraphics[width=4cm]{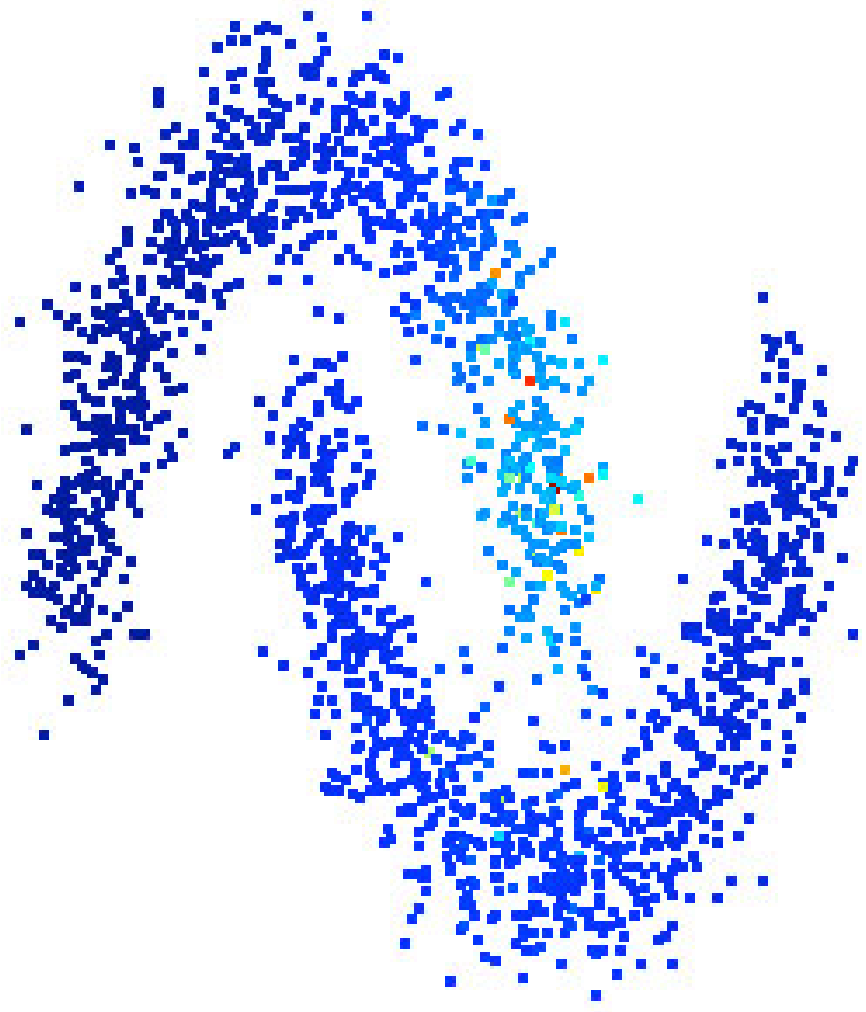}} 
\hspace{0.5cm}
\subfigure[Outcome of Algorithm \ref{alg-ratiocut}]{\includegraphics[width=4cm]{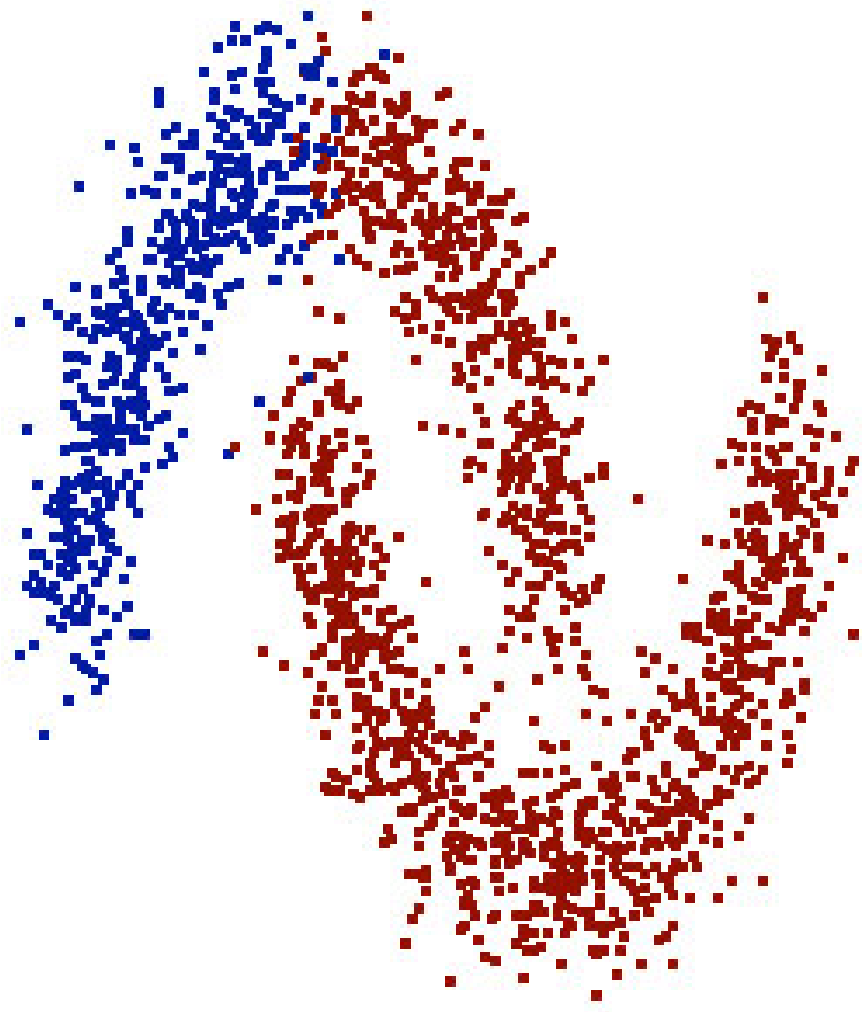}} 
\hspace{0.5cm}
\subfigure[Energy w.r.t. iteration]{\includegraphics[width=4cm]{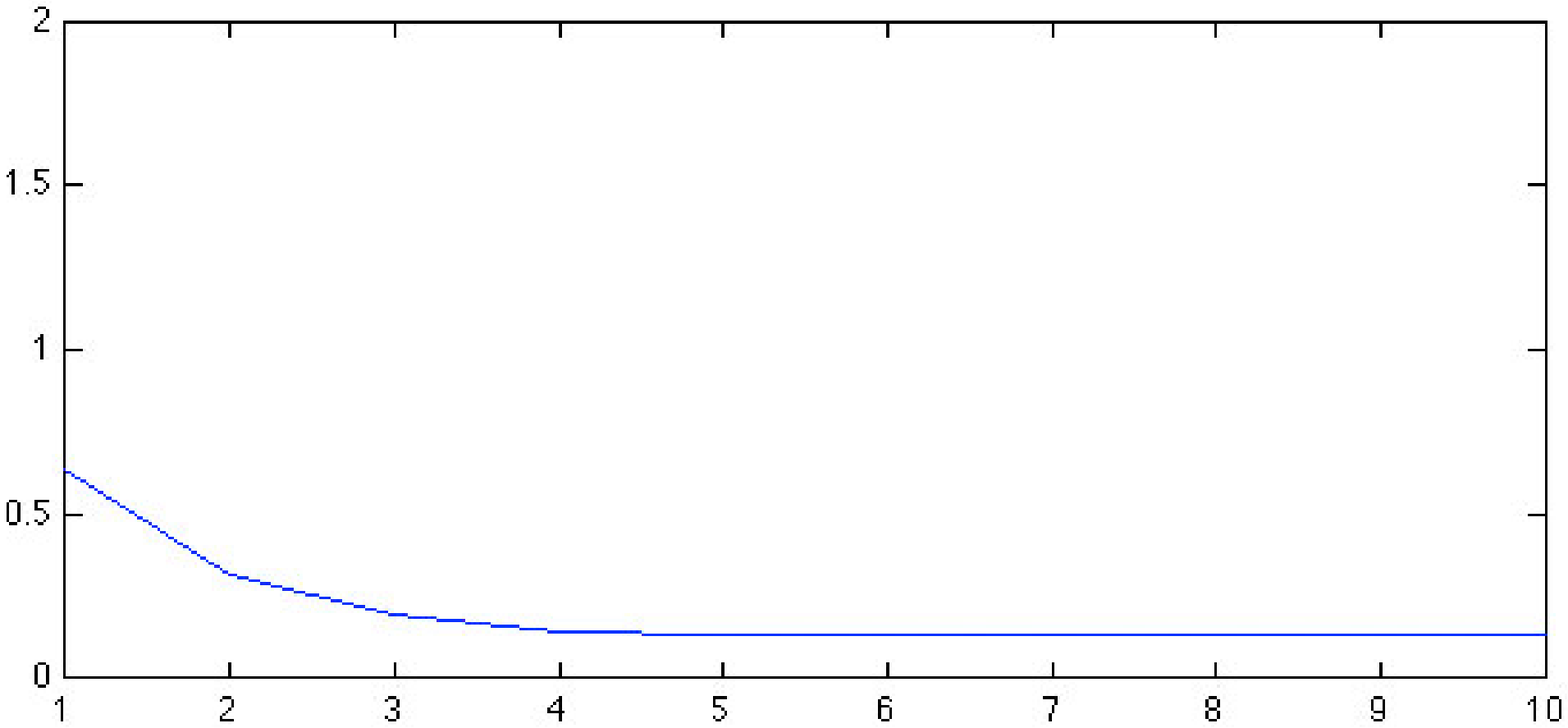}} 
\caption{Outcomes of Algorithm \ref{alg-ratiocut} with different initial data. On the right column the value of the ratio cut functional \eqref{rc2} is plotted versus the number of iterations.}
\label{experiments}
\end{figure}

%%%%%%%%%%%%%%%%%%%%%%%%%%%%%%%%%%%%%%

%%%%%%%%%%%%%%%%%%%%%%%%%%%%%%%%%%%%%%

\bibliographystyle{plain}

\bibliography{bib_nips}

\begin{thebibliography}{10}

\bibitem{art:BressonTaiChanSzlam12TransLearn}
X.~Bresson, X.-C. Tai, T.F. Chan, and A.~Szlam.
\newblock {Multi-Class Transductive Learning based on $\ell^1$ Relaxations of
  Cheeger Cut and Mumford-Shah-Potts Model}.
\newblock {\em UCLA CAM Report}, 2012.

\bibitem{pro:BuhlerHein09pLapla}
T.~B\"{u}hler and M.~Hein.
\newblock {Spectral Clustering Based on the Graph p-Laplacian}.
\newblock In {\em International Conference on Machine Learning}, pages 81--88,
  2009.

\bibitem{art:ChambollePock11FastPD}
A.~Chambolle and T.~Pock.
\newblock {A First-Order Primal-Dual Algorithm for Convex Problems with
  Applications to Imaging}.
\newblock {\em Journal of Mathematical Imaging and Vision}, 40(1):120--145,
  2011.

\bibitem{art:GoldsteinOsher09SB}
T.~Goldstein and S.~Osher.
\newblock {The Split Bregman Method for L1-Regularized Problems}.
\newblock {\em SIAM Journal on Imaging Sciences}, 2(2):323--343, 2009.

\bibitem{ratiocut}
L.~Hagen and A.~Kahng.
\newblock {New spectral methods for ratio cut partitioning and clustering.}
\newblock {\em IEEE Trans. Computer-Aided Design}, 11:1074 --1085, 1992.

\bibitem{pro:HeinBuhler10OneSpec}
M.~Hein and T.~B\"{u}hler.
\newblock {An Inverse Power Method for Nonlinear Eigenproblems with
  Applications in 1-Spectral Clustering and Sparse PCA}.
\newblock In {\em In Advances in Neural Information Processing Systems (NIPS)},
  pages 847--855, 2010.

\bibitem{pro:HeinSetzer11TightCheeger}
M.~Hein and S.~Setzer.
\newblock {Beyond Spectral Clustering - Tight Relaxations of Balanced Graph
  Cuts}.
\newblock In {\em In Advances in Neural Information Processing Systems (NIPS)},
  2011.

\bibitem{art:meyer76}
R.R. Meyer.
\newblock Sufficient conditions for the convergence of monotonic mathematical
  programming algorithms.
\newblock {\em Journal of Computer and System Sciences}, 12(1):108 -- 121,
  1976.

\bibitem{book:ostrowski}
A.~M. Ostrowski.
\newblock {\em Solution of Equations in Euclidean and Banach Spaces}.
\newblock Academic Press, New York, 1973.

\bibitem{pro:Rang-Hein-constrained}
S.~Rangapuram and M.~Hein.
\newblock {Constrained 1-Spectral Clustering}.
\newblock In {\em International conference on Artificial Intelligence and
  Statistics (AISTATS)}, pages 1143--1151, 2012.

\bibitem{art:RudinOsherFatemi92ROF}
L.~I. Rudin, S.~Osher, and E.~Fatemi.
\newblock {Nonlinear Total Variation Based Noise Removal Algorithms}.
\newblock {\em Physica D}, 60(1-4):259 -- 268, 1992.

\bibitem{pro:SzlamBresson10}
A.~Szlam and X.~Bresson.
\newblock Total variation and cheeger cuts.
\newblock In {\em Proceedings of the 27th International Conference on Machine
  Learning}, pages 1039--1046, 2010.

\bibitem{pro:ZelnikPerona04SelfTuning}
L.~Zelnik-Manor and P.~Perona.
\newblock {Self-tuning Spectral Clustering}.
\newblock In {\em In Advances in Neural Information Processing Systems (NIPS)},
  2004.

\end{thebibliography}

\end{document}